\numberwithin{equation}{section}
\newtheorem{theorem}[equation]{Theorem}
\newtheorem*{theorem*}{Theorem}
\newtheorem{lemma}[equation]{Lemma}
\newtheorem*{conjecture*}{Mamma Conjecture}
\newtheorem*{conjecture1*}{Mamma Conjecture (revisited)}
\newtheorem{proposition}[equation]{Proposition}
\newtheorem*{corollary*}{Corollary}
\theoremstyle{remark}
\theoremstyle{remark}
\newtheorem{remark}[equation]{Remark}
\newcommand{\cA}{{\mathcal A}}
\newcommand{\cB}{{\mathcal B}}
\newcommand{\cC}{{\mathcal C}}
\newcommand{\cD}{{\mathcal D}}
\newcommand{\cE}{{\mathcal E}}
\newcommand{\cG}{{\mathcal G}}
\newcommand{\cK}{{\mathcal K}}
\newcommand{\cN}{{\mathcal N}}
\newcommand{\cO}{{\mathcal O}}
\newcommand{\cQ}{{\mathcal Q}}
\newcommand{\cT}{{\mathcal T}}
\newcommand{\cX}{{\mathcal X}}
\newcommand{\cZ}{{\mathcal Z}}
\newcommand{\bbA}{\mathbb{A}}
\newcommand{\bbB}{\mathbb{B}}
\newcommand{\bbC}{\mathbb{C}}
\newcommand{\bbP}{\mathbb{P}}
\newcommand{\bbQ}{\mathbb{Q}}
\newcommand{\bbZ}{\mathbb{Z}}
\DeclareMathOperator{\NChow}{NChow} 
\DeclareMathOperator{\NNum}{NNum} 
\DeclareMathOperator{\NVoev}{NVoev} 
\DeclareMathOperator{\Voev}{Voev} 
\DeclareMathOperator{\Chow}{Chow} 
\DeclareMathOperator{\Num}{Num} 
\newcommand{\onil}{\otimes_\mathrm{nil}}
\newcommand{\dgcat}{\mathrm{dgcat}}
\newcommand{\perf}{\mathrm{perf}}
\newcommand{\dg}{\mathrm{dg}}
\newcommand{\Hom}{\mathrm{Hom}}
\newcommand{\Ho}{\mathrm{Ho}}
\newcommand{\op}{\mathrm{op}}
\newcommand{\too}{\longrightarrow}
\newcommand{\ie}{\textsl{i.e.}\ }
\title[Some remarks concerning Voevodsky's nilpotence conjecture]{Some remarks concerning \\Voevodsky's nilpotence conjecture}
\author{Marcello Bernardara, Matilde Marcolli and Gon{\c c}alo~Tabuada}
\address{Marcello Bernardara, Institut de Math\'ematiques de Toulouse \\ %
Universit\'e Paul Sabatier \\ %
118 route de Narbonne \\ %
31062 Toulouse Cedex 9\\ %
France}
\email{marcello.bernardara@math.univ-toulouse.fr} 
\urladdr{http://www.math.univ-toulouse.fr/~mbernard/}
\address{Matilde Marcolli, Mathematics Department, Mail Code 253-37, Caltech, 1200 E.~California Blvd. Pasadena, CA 91125, USA}
\email{matilde@caltech.edu} 
\urladdr{http://www.its.caltech.edu/~matilde}
\address{Gon{\c c}alo Tabuada, Department of Mathematics, MIT, Cambridge, MA 02139, USA}
\email{tabuada@math.mit.edu}
\urladdr{http://math.mit.edu/~tabuada}
\thanks{Sections \ref{sec:nil-orbit}-\ref{sec:Proof1} are a revised version of half of an old preprint \cite{old}.
The remaining work is a recent collaboration between the first and the last author.}
\subjclass[2000]{14A22, 14C15, 14D06, 14M10, 14M15, 18G55}
\date{\today}
\keywords{Voevodsky's nilpotence conjecture, quadric fibrations, complete intersections, Grassmannians,  determinantal varieties, homological
projective duality, Moishezon manifolds, noncommutative algebraic geometry.}
\begin{document}
\begin{abstract}
In this article we extend Voevodsky's nilpotence conjecture from smooth projective schemes to the broader setting of smooth proper dg categories.
Making use of this noncommutative generalization, we then address Voevodsky's original conjecture in the following cases: quadric fibrations, intersection of
quadrics, linear sections of Grassmannians, linear sections of determinantal varieties, homological projective duals, and Moishezon manifolds.

\end{abstract}

\maketitle
\vskip-\baselineskip
\vskip-\baselineskip
\vskip-\baselineskip
\section{Introduction and statement of results}
Let $k$ be a base field and $F$ a field of coefficients of characteristic zero.
\subsection*{Voevodsky's nilpotence conjecture}
In a foundational work \cite{Voevodsky}, Voevodsky introduced the smash-nilpotence equivalence relation $\sim_{\otimes\mathrm{nil}}$ on algebraic
cycles and conjectured its agreement with the classical numerical equivalence relation $\sim_{\mathrm{num}}$. Concretely, given a smooth projective
$k$-scheme $X$, he stated the following:

\smallskip

{\it Conjecture $V(X)$: $\cZ^\ast(X)_F/\!\!\sim_{\otimes\mathrm{nil}}=\cZ^\ast(X)_F/\!\!\sim_{\mathrm{num}}$.} 

\smallskip

Thanks to the work of Kahn-Sebastian, Matsusaka, Voevodsky, and Voisin (see \cite{KS,Matsusaka,Voevodsky,Voisin} and also \cite[\S11.5.2.3]{Andre}),
the above conjecture holds in the case of curves, surfaces, and abelian $3$-folds (when $k$ is of characteristic zero).
\subsection*{Noncommutative nilpotence conjecture}
A {\em dg category} $\cA$ is a category enriched over dg $k$-vector spaces; see \S\ref{sub:dg}. Following Kontsevich \cite{IAS,Miami,finMot},
$\cA$ is called {\em smooth} if it is perfect as a bimodule over itself and {\em proper} if for any two objects $x,y \in \cA$ we have
$\sum_i \mathrm{dim} \,H^i\cA(x,y) < \infty$. The classical example is the unique dg enhancement $\perf_\dg(X)$ of the category of perfect
complexes $\perf(X)$ of a smooth projective $k$-scheme $X$; see Lunts-Orlov \cite{LO}. As explained in \S\ref{sub:nilpotence}-\ref{sub:numerical}, 
the Grothendieck group $K_0(\cA)$ of every smooth proper dg category $\cA$ comes endowed with a $\otimes$-nilpotence equivalence relation 
$\sim_{\otimes \mathrm{nil}}$ and with a numerical equivalence relation $\sim_{\mathrm{num}}$. Motivated by the above conjecture, we state the~following:

\smallskip

{\it Conjecture $V_{NC}(\cA)$: $K_0(\cA)_F/\!\!\sim_{\otimes \mathrm{nil}}=K_0(\cA)_F/\!\!\sim_{\mathrm{num}}$.}

\smallskip

Our first main result is the following reformulation of Voevodsky's conjecture:
\begin{theorem}\label{thm:main1}
Conjecture $V(X)$ is equivalent to conjecture $V_{NC}(\perf_\dg(X))$.
\end{theorem}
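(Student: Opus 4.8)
The plan is to compare the two sides of conjecture $V(X)$ and conjecture $V_{NC}(\perf_\dg(X))$ one relation at a time, exhibiting a single group isomorphism that identifies both the $\otimes$-nilpotence and the numerical equivalence relations on the two sides. The starting point is the classical fact that, for a smooth projective $k$-scheme $X$, the Chern character induces an isomorphism of $F$-vector spaces
\[
\mathrm{ch}\colon K_0(X)_F \isoto \bigoplus_i \cZ^i(X)_F/\!\!\sim_{\mathrm{rat}}
\]
(and more precisely, working modulo any adequate equivalence relation, the associated graded of the $\gamma$-filtration agrees rationally with the Chow groups). Since $K_0(\perf_\dg(X)) \simeq K_0(X)$ by definition of the dg enhancement, it suffices to show that under $\mathrm{ch}$ the relation $\sim_{\otimes\mathrm{nil}}$ on $K_0(\perf_\dg(X))_F$ corresponds to $\sim_{\otimes\mathrm{nil}}$ on cycles, and likewise for $\sim_{\mathrm{num}}$.

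For the numerical relation, I would recall from \S\ref{sub:numerical} that $\sim_{\mathrm{num}}$ on $K_0(\cA)_F$ is defined via the bilinear Euler pairing $\langle a,b\rangle = \sum_i (-1)^i \dim \mathrm{Hom}_{\cA}(a,b[i])$ (equivalently, via the Euler form on $K$-theory). For $\cA = \perf_\dg(X)$ this Euler pairing is, by Riemann--Roch / the Mukai pairing, expressible through the intersection pairing of cycles twisted by the Todd class; since the Todd class is invertible, the radical of the Euler form on $K_0(X)_F$ matches, component by component under $\mathrm{ch}$, the radical of the intersection pairing, which is exactly $\sim_{\mathrm{num}}$ on $\cZ^\ast(X)_F$. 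Hence $K_0(\perf_\dg(X))_F/\!\!\sim_{\mathrm{num}} \,\simeq\, \cZ^\ast(X)_F/\!\!\sim_{\mathrm{num}}$.

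For the $\otimes$-nilpotence relation the key is that the symmetric monoidal structure is compatible on both sides: the tensor product of dg categories $\perf_\dg(X) \otimes \perf_\dg(Y) \simeq \perf_\dg(X\times Y)$ realizes the external product of cycles, and a class $a \in K_0(\perf_\dg(X))$ is $\otimes$-nilpotent precisely when some external power $a^{\otimes n}$ vanishes in $K_0(\perf_\dg(X^{\times n}))$, matching Voevodsky's definition that a cycle $\alpha$ is smash-nilpotent when $\alpha^{\times n} = 0$ in $\cZ^\ast(X^{\times n})$. One must check this compatibility is preserved by $\mathrm{ch}$: since $\mathrm{ch}$ is a ring isomorphism for the intersection/external products (again Riemann--Roch, with the Todd correction being multiplicative and invertible, hence not affecting nilpotence), a class in $K_0(X)_F$ is $\otimes$-nilpotent if and only if its image in $\bigoplus_i \cZ^i(X)_F$ is smash-nilpotent in each degree. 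Combining the two identifications yields the equivalence of the conjectures.

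The main obstacle I anticipate is the bookkeeping around gradings and the role of the Todd class: Voevodsky's conjecture is stated degree by degree on $\cZ^\ast(X)_F$, whereas $K_0$ is ungraded, so one has to argue that a $K$-theory class is $\otimes$-nilpotent (resp. numerically trivial) if and only if each of its graded Chern character components is, and that the Todd twist — which mixes degrees — does not interfere with either property. This is where the characteristic-zero hypothesis on $F$ and the invertibility of the Todd class are essential; once one verifies that $\sim_{\otimes\mathrm{nil}}$ and $\sim_{\mathrm{num}}$ on $\cZ^\ast(X)_F$ are each homogeneous (stable under the grading projections), the degreewise-to-total passage goes through and the two conjectures become literally the same statement transported along $\mathrm{ch}$.
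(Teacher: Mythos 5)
Your proposal is correct in outline but takes a genuinely different, more ``hands-on'' route than the paper. The paper never touches the Chern character directly in the proof of Theorem~\ref{thm:main1}: it instead invokes the fully-faithful bridge functor $R\colon \Chow(k)_F/_{\!-\otimes F(1)} \to \NChow(k)_F$ of \cite[Thm.~1.1]{CvsNC}, shows in Proposition~\ref{prop:orbit} that passing to the orbit category commutes with modding out by $\onil$, and then in Proposition~\ref{prop:compatibility} descends $R$ to fully-faithful functors $R_{\onil}$ and $R_\cN$ on the nilpotence and numerical quotients; the theorem then falls out of a commutative square of Hom-groups. Your approach unpacks what is hidden inside the full faithfulness of $R$, arguing directly on $K_0(X)_F \simeq CH^\ast(X)_F$. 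Two remarks to tighten it up. First, the Todd class is only relevant to the numerical side (via $\chi(E,F)=\int_X \mathrm{ch}(E^\vee)\,\mathrm{ch}(F)\,\mathrm{td}(X)$); the Chern character is already strictly multiplicative for external products, so no Todd correction appears when comparing $[M^{\otimes n}]=0$ in $K_0(X^n)_F$ with $\mathrm{ch}(M)^{\times n}=0$ in $CH^\ast(X^n)_F$. Second, the ``degreewise-to-total'' passage you flag as the main obstacle is real and not merely bookkeeping: to show that $a=\sum_i a_i$ is $\otimes$-nilpotent iff each homogeneous component $a_i$ is smash-nilpotent, you need that smash-nilpotent classes form a subgroup, i.e.\ that $f^{\otimes n}=0$ and $g^{\otimes m}=0$ imply $(f+g)^{\otimes (n+m-1)}=0$ (a binomial-expansion argument using the symmetry of the external product); the reverse implication then goes by induction on the lowest nonzero graded component. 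This is exactly the content of the Andr\'e--Kahn nilpotence lemma \cite[Lem.~7.4.2]{AK}, which the paper packages abstractly as Proposition~\ref{prop:orbit}. So your route is correct and arguably more transparent, at the cost of having to reprove the $\onil$-compatibility by hand, whereas the paper's categorical approach imports it once and for all and is better suited to the later applications (Theorems~\ref{thm:main2}--\ref{thm:main-hpd}), which all rely on the same diagram.
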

Theorem~\ref{thm:main1} shows us that when restricted to the commutative world, the noncommutative nilpotence conjecture reduces to Voevodsky's
original conjecture. Making use of this noncommutative viewpoint, we now address Voevodsky's nilpotence conjecture in several cases.

\subsection*{Quadric fibrations}
Let $S$ be a smooth projective $k$-scheme and $q:Q \to S$ a flat quadric fibration of relative dimension $n$ with $Q$ smooth. Recall from Kuznetsov \cite{kuznetquadrics} (see also  \cite{auel-bernar-bologn}) the construction of the sheaf $\cC_0$ of even parts of the Clifford algebra associated to $q$. Recall also from {\em loc. cit.} that when the discriminant divisor of $q$ is smooth and $n$ is even  (resp. odd) we have a discriminant double cover $\widetilde{S} \to S$ (resp. a square root stack $\widehat{S}$)
equipped with an Azumaya algebra $\cB_0$. Our second main result allows us to decompose conjecture $V(Q)$ into simpler pieces:
\begin{theorem}\label{thm:main2}
The following holds:
\begin{itemize}
\item[(i)] We have $V(Q) \Leftrightarrow V_{NC}(\perf_\dg(S, \cC_0)) + V(S)$.
\item[(ii)] When the discriminant divisor of $q$ is smooth and $n$ is even, we have $V(Q) \Leftrightarrow V(\widetilde{S}) + V(S)$. As a consequence, $V(Q)$ holds when $\mathrm{dim}(S)\leq 2$,
and becomes equivalent to $V(\widetilde{S})$ when $S$ is an abelian $3$-fold and $k$ is of characteristic zero. 
\item[(iii)] When the discriminant divisor of $q$ is smooth and $n$ is odd, we have $V(Q) \Leftrightarrow V_{NC}(\perf_\dg(\widehat{S},\cB_0)) + V(S)$.
As a consequence, $V(Q)$ becomes equivalent to $V_{NC}(\perf_\dg(\widehat{S}), \cB_0)$ when $\dim(S) \leq 2$.
This latter conjecture holds when $\dim(S) \leq 2$. 
\end{itemize}
\end{theorem}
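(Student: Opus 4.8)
The plan is to combine Kuznetsov's semiorthogonal decomposition of the derived category of a quadric fibration with the additivity of the noncommutative invariants involved and with Theorem~\ref{thm:main1}. For item~(i), recall from Kuznetsov \cite{kuznetquadrics} that the flat quadric fibration $q\colon Q\to S$ of relative dimension $n$ induces a semiorthogonal decomposition
\[
\perf_\dg(Q)=\big\langle \perf_\dg(S,\cC_0),\ \underbrace{\perf_\dg(S),\ \ldots,\ \perf_\dg(S)}_{n}\big\rangle,
\]
where the last $n$ components are the images of $q^\ast\perf_\dg(S)$ under the relative twists $\cO_Q(i)$. Since $Q$ is smooth projective, $\perf_\dg(Q)$ is smooth proper, and hence so is each of its semiorthogonal components; in particular conjecture $V_{NC}$ makes sense for $\perf_\dg(S,\cC_0)$. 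Next I would use the fact, immediate from the constructions of \S\ref{sub:nilpotence}-\ref{sub:numerical}, that a semiorthogonal decomposition $\cA=\langle\cA_1,\ldots,\cA_m\rangle$ induces a direct sum decomposition $K_0(\cA)_F=\bigoplus_i K_0(\cA_i)_F$ compatible with both $\sim_{\otimes\mathrm{nil}}$ and $\sim_{\mathrm{num}}$, so that $V_{NC}(\cA)$ holds if and only if $V_{NC}(\cA_i)$ holds for every $i$. Applied to the decomposition above, with the $n$ copies of $\perf_\dg(S)$ contributing together the single conjecture $V_{NC}(\perf_\dg(S))$, this gives $V_{NC}(\perf_\dg(Q))\Leftrightarrow V_{NC}(\perf_\dg(S,\cC_0))+V_{NC}(\perf_\dg(S))$; Theorem~\ref{thm:main1} applied to $Q$ and to $S$ then replaces the two outer conjectures by $V(Q)$ and $V(S)$, which is item~(i).

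For items~(ii) and~(iii) I would feed the identification of the even Clifford algebra with an Azumaya algebra into item~(i). When the discriminant divisor is smooth, $\cC_0$ is Morita equivalent over $S$ to the Azumaya algebra $\cB_0$ on the discriminant double cover $\widetilde S$ if $n$ is even, and to an Azumaya algebra $\cB_0$ on the square root stack $\widehat S$ if $n$ is odd; see \cite{kuznetquadrics} (and \cite{auel-bernar-bologn}). Moreover, smoothness of the discriminant divisor makes $\widetilde S$ a smooth projective $k$-scheme. A Morita equivalence induces an isomorphism of noncommutative motives and therefore preserves $V_{NC}$, so item~(i) already yields $V(Q)\Leftrightarrow V_{NC}(\perf_\dg(\widehat S,\cB_0))+V(S)$ in case~(iii). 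For case~(ii) one needs in addition the Tabuada--Van den Bergh theorem on noncommutative motives of Azumaya algebras: as $F$ has characteristic zero, the degree of $\cB_0$ is invertible in $F$, so the noncommutative motive with $F$-coefficients of $\perf_\dg(\widetilde S,\cB_0)$ is isomorphic to that of $\perf_\dg(\widetilde S)$; hence $V_{NC}(\perf_\dg(S,\cC_0))\Leftrightarrow V_{NC}(\perf_\dg(\widetilde S))\Leftrightarrow V(\widetilde S)$ by Theorem~\ref{thm:main1}, and combined with item~(i) this gives $V(Q)\Leftrightarrow V(\widetilde S)+V(S)$.

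The announced consequences are deductions from the known cases of Voevodsky's conjecture (curves, surfaces, and abelian $3$-folds in characteristic zero). For~(ii): $\dim\widetilde S=\dim S$, so when $\dim S\le 2$ both $V(\widetilde S)$ and $V(S)$ hold, whence $V(Q)$; and when $S$ is an abelian $3$-fold and $k$ is of characteristic zero, $V(S)$ holds, so $V(Q)\Leftrightarrow V(\widetilde S)$. For~(iii): when $\dim S\le 2$ we have $V(S)$, hence $V(Q)\Leftrightarrow V_{NC}(\perf_\dg(\widehat S,\cB_0))$; to see that this last conjecture holds I would apply the Tabuada--Van den Bergh theorem once more to replace $\perf_\dg(\widehat S,\cB_0)$ by $\perf_\dg(\widehat S)$, then invoke the root-stack semiorthogonal decomposition of $\perf_\dg(\widehat S)$ into finitely many copies of $\perf_\dg(S)$ and of $\perf_\dg(D)$, where $D\subset S$ is the smooth discriminant divisor (so $\dim D=\dim S-1\le 1$), and conclude by additivity together with the known cases, since $V$ holds for surfaces and for curves.

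The hard part is the step that makes the even Clifford algebra disappear, namely the Azumaya-invariance of noncommutative motives with $\mathbb{Q}$-coefficients, and, for item~(iii), its validity over the Deligne--Mumford stack $\widehat S$ rather than over a scheme. Everything else — Kuznetsov's decomposition, the additivity of the relevant invariants under semiorthogonal decompositions, and Theorem~\ref{thm:main1} — is formal.
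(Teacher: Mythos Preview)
Your argument for items~(i), (ii), and the first equivalence in~(iii) is the paper's proof: Kuznetsov's semiorthogonal decomposition of $\perf(Q)$, the resulting direct sum splitting of the noncommutative Chow motive (hence additivity of $V_{NC}$), the Fourier--Mukai equivalences $\perf(S,\cC_0)\simeq\perf(\widetilde S,\cB_0)$ resp.\ $\perf(\widehat S,\cB_0)$, the Azumaya-invariance \cite{VdB} over the scheme $\widetilde S$, and Theorem~\ref{thm:main1}. One minor point: the compatibility of the $K_0$-splitting with both equivalence relations is not quite ``immediate from \S\ref{sub:nilpotence}--\ref{sub:numerical}''; the paper obtains it by first producing a direct sum decomposition of $\perf_\dg(Q)$ in $\NChow(k)_F$ and then passing to the quotients $\NVoev(k)_F$ and $\NNum(k)_F$.

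The genuine divergence is in the last sentence of~(iii), where one must show that $V_{NC}(\perf_\dg(\widehat S,\cB_0))$ holds when $\dim S\le 2$. You propose to strip off $\cB_0$ via Tabuada--Van den Bergh and then use the root-stack semiorthogonal decomposition of $\perf(\widehat S)$ into $\perf(S)$ and $\perf(D)$. The paper does \emph{not} do this: it notes that, by the equivalence just proved, the conjecture is equivalent to $V(Q)$, and then proves $V(Q)$ directly from Vial's decomposition of the rational Chow motive of a quadric fibration \cite[Thm.~4.2, Cor.~4.4]{vial-fibrations}, which writes $M_\bbQ(Q)$ as copies of $M_\bbQ(S)$ plus a submotive of a scheme of dimension $\le\dim S$. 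Your worry is on target: \cite{VdB} is stated for schemes, so invoking it over the Deligne--Mumford stack $\widehat S$ is not covered by the paper's references. The paper's route sidesteps this completely by going back to the total space $Q$, where only classical motivic input is required; your route is plausible but would need the extra justification you yourself flag.
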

\begin{remark} 
The (rational) Chow motive of a quadric fibration $q: Q \to S$ was computed by Vial in \cite[Thm. 4.2 and  Cor. 4.4]{vial-fibrations}.
In the particular case where $\mathrm{dim}(S) \leq 2$, it consists of a direct sum of submotives of smooth projective $k$-schemes of dimension at most two.
This motivic decomposition provides an alternative ``geometric'' proof of conjecture $V(Q)$. We will rely on this argument to prove
the last statement of item (iii). In the particular case where $S$ is a curve and $k$ is algebraically closed, we provide also a ``categorical'' proof of this last statement; see Remark \ref{rk:new}. The fact that $V_{NC}(\perf_\dg(\widehat{S}), \cB_0)$ holds when $\dim(S) \leq 2$
will play a key role in the proof of Theorem \ref{thm:main3} below.
\end{remark}

\subsection*{Intersection of quadrics}
Let $X$ be a smooth complete intersection of $r$ quadric hypersurfaces in $\bbP^m$. The linear span of these $r$ quadrics gives rise to a hypersurface
$Q \subset \bbP^{r-1} \times \bbP^m$, and the projection into the first factor to a flat quadric fibration $q: Q \to \bbP^{r-1}$
of relative dimension $m-1$. 
\begin{theorem}\label{thm:main3}
The following holds:
\begin{itemize}
\item[(i)] We have $V(X) \Leftrightarrow V_{NC}(\perf_\dg(\bbP^{r-1}, \cC_0))$.
\item[(ii)] When the discriminant divisor of $q$ is smooth and $m$ is odd, we have $V(X) \Leftrightarrow V(\widetilde{\bbP^{r-1}})$.
As a consequence, $V(X)$ holds when $r\leq 3$.
\item[(iii)] When the discriminant divisor of $q$ is smooth  and $m$ is even, we have $V(X) \Leftrightarrow V_{NC}(\perf_\dg(\widehat{\bbP^{r-1}}, \cB_0))$. This latter conjecture holds when $r\leq 3$ and $k$ is algebraically closed.
\end{itemize}
\end{theorem}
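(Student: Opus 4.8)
The plan is to reduce everything, via Theorem~\ref{thm:main1}, to a statement about the noncommutative space $(\bbP^{r-1},\cC_0)$, and then to apply the case $S=\bbP^{r-1}$ of Theorem~\ref{thm:main2}. First note that, $X$ being a smooth complete intersection, the total space $Q\subset\bbP^{r-1}\times\bbP^m$ of the associated linear system of quadrics is smooth (a singular point of $Q$ would force the differentials of the defining quadrics to be linearly dependent at a point of $X$, contradicting the smoothness of $X$), and $q:Q\to\bbP^{r-1}$ is a flat quadric fibration of relative dimension $m-1$. In particular the sheaf $\cC_0$ of even Clifford algebras is defined on $\bbP^{r-1}$ and $\perf_\dg(\bbP^{r-1},\cC_0)$ is a smooth proper dg category to which Theorem~\ref{thm:main2} applies.

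For item~(i) the key input is Kuznetsov's derived-categorical analysis of intersections of quadrics \cite{kuznetquadrics}: according to the mutual position of $r$ and $m$, either $\perf_\dg(X)$ admits a semiorthogonal decomposition whose pieces are $\perf_\dg(\bbP^{r-1},\cC_0)$ and a chain of twists of $\cO_X$, or the two categories are equivalent, or, in the complementary range, it is $\perf_\dg(\bbP^{r-1},\cC_0)$ that decomposes with one piece $\perf_\dg(X)$ and the remaining pieces exceptional objects. In every case $\perf_\dg(X)$ and $\perf_\dg(\bbP^{r-1},\cC_0)$ are related by a semiorthogonal decomposition all of whose other pieces are generated by exceptional objects. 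I would then invoke the additivity of $V_{NC}$ along semiorthogonal decompositions --- the formal property underlying the proof of Theorem~\ref{thm:main2} --- together with the fact that $V_{NC}$ holds for the dg category of an exceptional object (its Grothendieck group is $\bbZ$, on which $\sim_{\otimes\mathrm{nil}}$ and $\sim_{\mathrm{num}}$ both reduce to equality). This yields $V_{NC}(\perf_\dg(X))\Leftrightarrow V_{NC}(\perf_\dg(\bbP^{r-1},\cC_0))$, and Theorem~\ref{thm:main1} rewrites the left-hand side as $V(X)$, proving item~(i).

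For items~(ii) and~(iii) I would specialise Theorem~\ref{thm:main2} to $S=\bbP^{r-1}$. Since $\bbP^{r-1}$ is cellular, its rational, smash-nilpotence and numerical equivalence relations all coincide, so $V(\bbP^{r-1})$ holds; hence Theorem~\ref{thm:main2}(i) gives $V(Q)\Leftrightarrow V_{NC}(\perf_\dg(\bbP^{r-1},\cC_0))$, and combined with item~(i) this yields $V(X)\Leftrightarrow V(Q)$. Now assume the discriminant divisor of $q$ is smooth. If $m$ is odd then the relative dimension $m-1$ is even, so Theorem~\ref{thm:main2}(ii) gives $V(Q)\Leftrightarrow V(\widetilde{\bbP^{r-1}})+V(\bbP^{r-1})\Leftrightarrow V(\widetilde{\bbP^{r-1}})$, which is item~(ii); if $m$ is even then $m-1$ is odd, so Theorem~\ref{thm:main2}(iii) gives $V(Q)\Leftrightarrow V_{NC}(\perf_\dg(\widehat{\bbP^{r-1}},\cB_0))+V(\bbP^{r-1})\Leftrightarrow V_{NC}(\perf_\dg(\widehat{\bbP^{r-1}},\cB_0))$, which is item~(iii). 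For the two consequences: when $r\le 3$ one has $\dim(\bbP^{r-1})\le 2$, so in case~(ii) the discriminant double cover $\widetilde{\bbP^{r-1}}$ is a smooth projective scheme of dimension at most two, for which Voevodsky's conjecture is known (curves and surfaces; cf.\ \cite{KS,Voisin}); and in case~(iii) the conjecture $V_{NC}(\perf_\dg(\widehat{\bbP^{r-1}},\cB_0))$ is exactly the last assertion of Theorem~\ref{thm:main2}(iii), whose proof uses the hypothesis that $k$ is algebraically closed.

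The step I expect to be the main obstacle is the bookkeeping in item~(i): one must check that the even Clifford algebra produced by Kuznetsov's decomposition of $\perf_\dg(X)$ is literally the $\cC_0$ attached to the quadric fibration $q:Q\to\bbP^{r-1}$, so that Theorem~\ref{thm:main2} applies verbatim, and one must treat the several ranges of $(r,m)$ uniformly, verifying in each that the ``extra'' admissible subcategories are generated by exceptional objects and hence contribute trivially to $V_{NC}$. Once this is secured, items~(i)--(iii) are formal consequences of Theorems~\ref{thm:main1} and~\ref{thm:main2} together with the cellularity of $\bbP^{r-1}$.
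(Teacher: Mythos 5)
Your proof is essentially the paper's: item (i) is handled exactly as in the paper, via Kuznetsov's Theorem 5.5 and the three ranges of $m-2r+1$, reducing to $V_{NC}(\perf_\dg(\bbP^{r-1},\cC_0))$; items (ii)--(iii) follow from the $S=\bbP^{r-1}$ specialisation of Theorem~\ref{thm:main2}, which is precisely what the paper means by ``the proofs are similar.'' The one stylistic difference is that you detour through $V(Q)$ for the incidence quadric $q:Q\to\bbP^{r-1}$ (correctly checking $Q$ is smooth when $X$ is), whereas the paper's intended route simply chains item (i) with the Fourier--Mukai equivalences $\perf(\bbP^{r-1},\cC_0)\simeq\perf(\widetilde{\bbP^{r-1}},\cB_0)$ (resp. $\perf(\widehat{\bbP^{r-1}},\cB_0)$) and the Azumaya reduction; both routes are equivalent. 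One small inaccuracy: you attribute the ``$k$ algebraically closed'' hypothesis in item (iii) to the proof of the last assertion of Theorem~\ref{thm:main2}(iii), but that argument uses Vial's motivic decomposition, which needs no algebraic closure; the algebraic-closure hypothesis enters only in the alternative ``categorical'' proof sketched in Remark~\ref{rk:new} (via Graber--Harris--Starr), so your claim about its source does not match the paper. Finally, when you invoke ``additivity of $V_{NC}$'' you should, as the paper does, record that the relevant embeddings are of Fourier--Mukai type, so that the induced dg enhancement $\perf^\dg(\bbP^{r-1},\cC_0)\subset\perf_\dg(X)$ is Morita equivalent to the canonical $\perf_\dg(\bbP^{r-1},\cC_0)$ --- otherwise the decomposition in $\NChow(k)_F$ from \cite[Prop.~3.11]{bernardara-tabuada-paranj} would involve a possibly different dg model.
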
 
\begin{remark}
The (rational) Chow motive of a complete intersection $X$ was computed in \cite[Thm.~2.1]{bernardara-tabuada-paranj} in the particular cases where
$r\leq 2$ or $r=3$ and $m$ is odd. It consists of a direct sum of submotives of smooth projective $k$-schemes of dimension at most one. This
motivic decomposition provides an alternative proof of conjecture $V(X)$. A similar argument holds in the case where $r=3$ and $m$ is~even.
\end{remark}
\begin{remark}{(Relative version)}
Theorem~\ref{thm:main3} has a relative analogue with $X$ replaced by a generic relative complete intersection $X \to S$ of $r$ quadric fibrations $Q_i \to S$
of relative dimension $m-1$; consult \cite[Def. 1.2.4]{auel-bernar-bologn} for details. Items (i), (ii), and (iii), hold similarly with $\bbP^{r-1}$ replaced by a $\bbP^{r-1}$-bundle
$T \to S$, with $V(\widetilde{\bbP^{r-1}})$ replaced by $V(\widetilde{T}) + V(S)$, and with $V_{NC}(\perf_\dg(\widehat{\bbP^{r-1}}, \cB_0))$ replaced by $V_{NC}(\perf_\dg(\widehat{T}, \cB_0))
+ V(S)$, respectively. Note that thanks to the relative item (ii), conjecture $V(X)$ holds when $r=2$ and $S$ is a curve. 
\end{remark}
\subsection*{Linear sections of Grassmanians}
Following Kuznetsov \cite{kuznet-grass}, consider the following two classes of schemes:
\begin{itemize}
\item[(i)] Let $X_L$ be a generic linear section of codimension $r$ of the Grassmannian $\mathrm{Gr}(2,W)$ (with $W=k^{\oplus 6}$) under the Pl\"ucker embedding, and $Y_L$ the
 corresponding dual linear section of the cubic Pfaffian $\mathrm{Pf}(4,W^*)$ in $\bbP(\Lambda^2 W^*)$.
\end{itemize}
For example when $r=3$, $X_L$ is a Fano $5$-fold; when $r=4$, $X_L$ is a Fano $4$-fold; and when $r=6$, $X_L$ is a $K3$ surface of degree 14 and $Y_L$ a Pfaffian cubic $4$-fold. Moreover, $X_L$ and $Y_L$ are smooth whenever $r \leq 6$.
\begin{itemize}
\item[(ii)] Let $X_L$ be a generic linear section of codimension $r$ of the Grassmannian $\mathrm{Gr}(2,W)$ (with $W=k^{\oplus 7}$) under the Pl\"ucker embedding, and $Y_L$ the
 corresponding dual linear section of the cubic Pfaffian $\mathrm{Pf}(4,W^*)$ in $\bbP(\Lambda^2 W^*)$.
\end{itemize}
For example when $r=5$, $X_L$ is a Fano $5$-fold; when $r=4$, $X_L$ is a Fano $4$-fold; when $r=8$, $Y_L$ is a Fano $4$-fold; and when $r=9$, $Y_L$
is a Fano $5$-fold. Moreover, $X_L$ and $Y_L$ are smooth whenever $r \leq 10$.

\begin{theorem}\label{thm:main-grass}
Let $X_L$ and $Y_L$ be as in the above classes (i)-(ii). Under the assumption that $X_L$ and $Y_L$ are smooth, we have $V(X_L) \Leftrightarrow V(Y_L)$. This conjecture holds when $r\leq 6$ (class (i)), and when $r\leq 6$ and $8 \leq r \leq 10$ (class (ii)).
\end{theorem}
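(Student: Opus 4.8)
The plan is to transport the whole question to the noncommutative world via Theorem~\ref{thm:main1} and then exploit Kuznetsov's homological projective duality. By Theorem~\ref{thm:main1}, $V(X_L)$ is equivalent to $V_{NC}(\perf_\dg(X_L))$ and $V(Y_L)$ to $V_{NC}(\perf_\dg(Y_L))$, so it suffices to establish $V_{NC}(\perf_\dg(X_L)) \Leftrightarrow V_{NC}(\perf_\dg(Y_L))$. For this I would invoke the homological projective duality \cite{kuznet-grass} between $\mathrm{Gr}(2,W)$, in its Pl\"ucker embedding, and the (noncommutative resolution of the) cubic Pfaffian $\mathrm{Pf}(4,W^*)$. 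Since $X_L$ and $Y_L$ are assumed smooth, for the relevant values of $r$ the linear section $Y_L$ meets the Pfaffian away from its singular locus, so HPD produces honest semiorthogonal decompositions
\[ \perf(X_L) = \langle \cT_L,\; \cE_1,\dots,\cE_a \rangle, \qquad \perf(Y_L) = \langle \cE'_1,\dots,\cE'_b,\; \cT'_L \rangle, \]
where each $\cE_i$ and $\cE'_j$ is generated by exceptional objects (mutations of the restrictions of the Lefschetz blocks, which on the Grassmannian side are built from exceptional vector bundles, and on the Pfaffian side from the Lefschetz blocks of the noncommutative resolution), and where the \emph{interesting} components satisfy $\cT_L \simeq \cT'_L$ as triangulated categories.

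Next I would promote this picture to dg enhancements. By Lunts--Orlov \cite{LO} these enhancements are unique, admissible subcategories inherit them, and the equivalence $\cT_L \simeq \cT'_L$ lifts to a Morita equivalence of dg categories; thus $\perf_\dg(X_L)$ and $\perf_\dg(Y_L)$ both acquire semiorthogonal decompositions whose non-common factors carry full exceptional collections and which share a common dg factor $\cT$ (up to Morita equivalence). Since $V_{NC}$ is a Morita invariant, and — exactly as in the proof of Theorem~\ref{thm:main2} — is compatible with semiorthogonal decompositions in the sense that $V_{NC}(\langle \cB, \cC\rangle) \Leftrightarrow V_{NC}(\cB) + V_{NC}(\cC)$, and since $V_{NC}(\perf_\dg(k))$ holds trivially (on $K_0(\perf_\dg(k)) = \bbZ$ both $\sim_{\otimes\mathrm{nil}}$ and $\sim_{\mathrm{num}}$ collapse to equality), it follows that $V_{NC}$ holds for any dg category carrying a full exceptional collection. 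Hence
\[ V_{NC}(\perf_\dg(X_L)) \;\Leftrightarrow\; V_{NC}(\cT) \;\Leftrightarrow\; V_{NC}(\perf_\dg(Y_L)), \]
and, back through Theorem~\ref{thm:main1}, $V(X_L) \Leftrightarrow V(Y_L)$.

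For the unconditional statement I would run through the stated ranges using the dimension counts $\dim X_L = \dim \mathrm{Gr}(2,W) - r$ and $\dim Y_L = \dim \mathrm{Pf}(4,W^*) - (\dim \Lambda^2 W - r)$. In each case in the range one checks that at least one of $X_L$, $Y_L$ is empty (so that the derived category of the other is generated by exceptional objects, whence $V$ holds for it), or is zero-dimensional, a curve, a surface, or a $K3$ surface — for all of which $V$ is known \cite{KS,Matsusaka,Voevodsky,Voisin}; the only borderline case, $r=5$ in class (i), is handled by observing that $Y_L$ is then a cubic threefold, whose Chow motive is a direct sum of Tate motives and a Tate twist of a direct summand of $\mathfrak{h}^1$ of a curve, so that $V(Y_L)$ follows from the curve case. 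In each instance $V$ is thus established for one of the two varieties, and the equivalence proved above gives it for both. (The case $r=7$ of class (ii) is excluded precisely because there both $X_L$ and $Y_L$ are general threefolds with no such motivic handle.)

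The main obstacle is the second half of the first step: one must check that Kuznetsov's triangulated HPD statements — in particular the identification of the two interesting components and the exceptionality of the complementary blocks after restriction to the smooth linear sections — lift faithfully to dg enhancements, and that the noncommutative resolution appearing on the Pfaffian side does not interfere once one restricts to a smooth $Y_L$. The remaining ingredients are essentially the compatibility of $V_{NC}$ with semiorthogonal decompositions together with the bookkeeping of dimensions, plus the importation of the motivic decomposition of a cubic threefold for the single borderline case.
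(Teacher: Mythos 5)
Your proposal follows essentially the same route as the paper: translate via Theorem~\ref{thm:main1}, use Kuznetsov's HPD to compare $\perf_\dg(X_L)$ with $\perf_\dg(Y_L)$ up to exceptional blocks, and close by the $\mathrm{dim}\leq 2$ cases plus a Fano $3$-fold argument for $r=5$. The one genuine imprecision is your Lunts--Orlov step: uniqueness of enhancements in \cite{LO} is proved for $\perf(X)$ of a projective scheme, not for its admissible subcategories, and by itself it does not promote the triangulated equivalence $\cT_L \simeq \cT'_L$ to a Morita equivalence of the induced dg enhancements. What the paper uses instead --- and what you should use here --- is that Kuznetsov's HPD embeddings and equivalences are all of Fourier--Mukai type, so the kernel directly furnishes the dg lift; this is precisely the device in the proof of Theorem~\ref{thm:main2}(i), where the induced enhancement $\perf^\dg(S,\cC_0)$ is traded for the canonical one $\perf_\dg(S,\cC_0)$ via the Fourier--Mukai kernel. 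Your $r=5$ route through the cubic threefold $Y_L$, rather than the paper's Gorchinskiy--Guletskii argument for the Fano $3$-fold $X_L$, is a harmless variant since both reduce to the known curve case.
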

\begin{remark}
To the best of the authors' knowledge, Theorem \ref{thm:main-grass} proves Voevodsky's nilpotence conjecture in new cases.
\end{remark}
\subsection*{Linear sections of determinantal varieties}
Let $U$ and $V$ be $k$-vector spaces of dimensions $m$ and $n$, respectively, with $n \geq m$, and  $0 < r < m$ an integer.
Following \cite{berna-bolo-faenzi},
consider the universal determinantal variety $Z_{m,n}^r \subset \bbP(U \otimes V)$ given by the locus
of matrices $M: U \to V^*$ 
of rank at most $r$. Its Springer resolution is denoted by $\cX_{m,n}^r:= \bbP(\cQ \otimes U) \to \mathrm{Gr}(r,U)$, where $\cQ$ stands for
the tautological quotient on $\mathrm{Gr}(r,U)$. Under these notations, we have the following class of schemes:
\begin{itemize}
\item[(i)] Let $X_L$ be a generic linear section of codimension $c$ of $\cX_{m,n}^r$ under the map $\cX_{m,n}^r \to \bbP(U \otimes V)$, and $Y_L$ the
 corresponding dual linear section of $\cX_{m,n}^{m-r}$ under the map $\cX_{m,n}^{m-r} \to \bbP(U^* \otimes V^*)$.
\end{itemize}
\begin{remark}
As explained in \cite[\S3]{berna-bolo-faenzi}, $X_L$ and $Y_L$ are smooth crepant categorical resolution of singularities of $(Z_{m,n}^r)_L$ and
$(Z_{m,n}^{m-r})_L$, respectively.
\end{remark}
For example when $m=n=4$ and $r=1$, $X_L$ is a $(6-c)$-dimensional section of $\bbP^3 \times \bbP^3$ under the Segre
embedding, and $Y_L$ the resolution of the dual $(c-2)$-dimensional determinantal quartic.
In the same vein, when $m=n=4$ and $r=2$, $X_L$ is a $(11-c)$-dimensional section of the self-dual orbit of $4 \times 4$ matrices of rank $2$,
and $Y_L$ the $(c-5)$-dimensional dual section. Moreover, the following holds:
\begin{itemize}
\item[(a)] When $c \leq 7$, $X_L$ is a Fano $(11-4)$-fold and $\dim(Y_L) \leq 2$.
\item[(b)] When $c=8$, $X_L$ and $Y_L$ are dual Calabi-Yau threefolds
\item[(c)] When $c\geq 9$, $Y_L$ is a Fano ($c-5$)-fold and $\dim(X_L) \leq 2$.
\end{itemize}
For further example, consult \cite[\S 3.3]{berna-bolo-faenzi} and well as Tables $1$ and $2$ in {\em loc. cit.}
\begin{theorem}\label{thm:main-deter}
When $X_L$ and $Y_L$ are as in the above class (i), we have $V(X_L) \Leftrightarrow V(Y_L)$.
This conjecture holds when $\dim(X_L) \leq 2$ or $\dim(Y_L) \leq 2$.
\end{theorem}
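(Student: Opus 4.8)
The plan is to pass to the noncommutative world via Theorem~\ref{thm:main1} and then feed in the homological projective duality between the Springer resolutions $\cX_{m,n}^r$ and $\cX_{m,n}^{m-r}$ established by Bernardara--Bolognesi--Faenzi \cite{berna-bolo-faenzi}. Under the hypotheses of class (i) --- which ensure that $X_L$ and $Y_L$ are smooth projective and that the linear subspace cutting them out is admissible in the sense of homological projective duality --- Kuznetsov's duality theorem (as worked out in \cite{berna-bolo-faenzi} for the determinantal situation) provides semiorthogonal decompositions
\[
\perf_\dg(X_L) = \langle \cT_L,\, \cA_1,\dots,\cA_a \rangle, \qquad \perf_\dg(Y_L) = \langle \cB_1,\dots,\cB_b,\, \cT_L \rangle,
\]
in which the very same ``Kuznetsov component'' $\cT_L$ occurs on both sides, while each $\cA_i$ (resp.\ each $\cB_j$) is equivalent to one of the Lefschetz blocks of $\perf_\dg(\cX_{m,n}^r)$ (resp.\ of $\perf_\dg(\cX_{m,n}^{m-r})$), hence to an admissible dg subcategory thereof. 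Since $X_L$ and $Y_L$ are smooth projective, the dg categories $\perf_\dg(X_L)$ and $\perf_\dg(Y_L)$ are smooth proper, and so are all of $\cT_L$, the $\cA_i$ and the $\cB_j$, because admissible dg subcategories of a smooth proper dg category remain smooth proper; in particular the conjectures $V_{NC}(-)$ are defined for all these categories.

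The first key step is to check that the complementary components satisfy the noncommutative conjecture. The scheme $\cX_{m,n}^r = \bbP(\cQ \otimes U)$ is a projective bundle over the Grassmannian $\mathrm{Gr}(r,U)$, hence admits a cellular decomposition, so conjecture $V(\cX_{m,n}^r)$ holds; by Theorem~\ref{thm:main1} this gives $V_{NC}(\perf_\dg(\cX_{m,n}^r))$. Invoking the additivity of the noncommutative nilpotence conjecture along semiorthogonal decompositions --- the same principle underpinning Theorems~\ref{thm:main2} and \ref{thm:main3} --- we deduce that $V_{NC}(\cA_i)$ holds for every $i$. The scheme $\cX_{m,n}^{m-r}$ is likewise a projective bundle over a Grassmannian, hence also cellular, and the same argument shows that $V_{NC}(\cB_j)$ holds for every $j$.

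It then remains to assemble the pieces. Applying once more the additivity of $V_{NC}$ along the two displayed semiorthogonal decompositions, we obtain
\[
V(X_L) \,\Leftrightarrow\, V_{NC}(\perf_\dg(X_L)) \,\Leftrightarrow\, V_{NC}(\cT_L) \,\Leftrightarrow\, V_{NC}(\perf_\dg(Y_L)) \,\Leftrightarrow\, V(Y_L),
\]
where the two outer equivalences are instances of Theorem~\ref{thm:main1}; this proves $V(X_L) \Leftrightarrow V(Y_L)$. Finally, if $\dim(X_L) \leq 2$ then $X_L$ is a smooth projective curve or surface, so conjecture $V(X_L)$ holds by the results of Kahn--Sebastian, Matsusaka, Voevodsky and Voisin recalled in the introduction, and hence $V(Y_L)$ holds as well; the case $\dim(Y_L) \leq 2$ is entirely symmetric.

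I expect the main obstacle to be the structural input at step one: one must verify that the determinantal homological projective duality of \cite{berna-bolo-faenzi} applies to the class (i) data in the stated generality --- that the subspace cutting out $X_L$ and $Y_L$ is admissible, that the resulting semiorthogonal decompositions of $\perf_\dg(X_L)$ and $\perf_\dg(Y_L)$ genuinely share a common Kuznetsov component $\cT_L$, and that every complementary component is equivalent to an admissible dg subcategory of $\perf_\dg(\cX_{m,n}^r)$, respectively of $\perf_\dg(\cX_{m,n}^{m-r})$ --- which requires keeping track of exactly which Lefschetz blocks of the ambient resolutions survive, and with which twists, for the given codimension $c$. Once this is secured the remaining arguments are formal; the only further routine point is the standard lift of the triangulated semiorthogonal decompositions to their dg enhancements.
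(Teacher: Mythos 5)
Your proof is correct, but it takes a genuinely different (and somewhat longer) route than the paper's. The paper invokes the sharper form of the determinantal HPD result, \cite[Thm.~3.4]{berna-bolo-faenzi}, which says directly that one of three situations occurs: (a) $\perf(X_L) = \langle \perf(Y_L), \cE_1, \ldots, \cE_n\rangle$ with the $\cE_i$ exceptional bundles, (b) the symmetric decomposition with $X_L$ and $Y_L$ swapped, or (c) a Fourier-Mukai equivalence $\perf(X_L) \simeq \perf(Y_L)$. Since each exceptional object generates a subcategory Morita-equivalent to $\perf_\dg(\underline{k})$, the conjecture $V_{NC}$ for the complementary pieces is trivially verified, and additivity of $V_{NC}$ along semi-orthogonal decompositions (the argument of Theorem~\ref{thm:main2}(i)) immediately yields $V(X_L) \Leftrightarrow V(Y_L)$. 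Your proposal instead runs through the general homological projective duality framework of Theorem~\ref{thm:main-hpd}: you postulate a shared Kuznetsov component $\cT_L$ with Lefschetz blocks from $\perf_\dg(\cX_{m,n}^r)$ and $\perf_\dg(\cX_{m,n}^{m-r})$ as complements, and verify the required hypothesis $V_{NC}(\bbA_0^\dg)$ by observing that the Springer resolutions are projective bundles over Grassmannians, hence cellular, so $V(\cX_{m,n}^r)$ holds. This is precisely the specialization of Theorem~\ref{thm:main-hpd} that the paper's remark alludes to, and it is valid --- but it requires the extra cellularity step that the paper's more specific input renders unnecessary, and you leave the verification of the HPD hypotheses (admissibility of $L$, identification of the Kuznetsov components, Fourier-Mukai compatibility) as an acknowledged gap, which the paper closes in one stroke by citing the BBF theorem. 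What your route buys is uniformity: it subsumes the determinantal case into the general HPD statement rather than treating it ad hoc.
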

\begin{remark}
To the best of the authors' knowledge, Theorem \ref{thm:main-deter} proves Voevodsky's nilpotence conjecture in new cases.
\end{remark}
\subsection*{Homological projective duality}
Making use of Kuznetsov's theory of homological projective duality (HPD) \cite{kuznetsov:hpd}, Theorem \ref{thm:main-grass} admits the following generalization:
let $X$ be a smooth projective $k$-scheme equipped with an ample line bundle $\cO_X(1)$. Note that $\cO_X(1)$ gives rise to a morphism $X \to \bbP(V)$,
where $V:=H^0(X,\cO_X(1))^*$. Let $Y$ be the
HP-dual of $X$, $\cO_Y(1)$ the associated ample line bundle, and $Y \to \bbP(V^\ast)$ the associated morphism. Assume that $\perf(X)$ admits a
{\em Lefschetz decomposition $\perf(X)=\langle \bbA_0, \ldots, \bbA_n(n)\rangle$ with respect to $\cO_X(1)$}, \ie a semi-orthogonal decomposition of
$\perf(X)$ such that $\bbA_0 \supset \ldots \supset \bbA_n$ and $\bbA_i(i):= \bbA_i\otimes \cO(i)$. Assume also that conjecture $V_{NC}(\bbA_0^\dg)$ holds, where
$\bbA_0^\dg$ stands for the dg enhancement of $\bbA_0$ induced from $\perf_\dg(X)$; see \S\ref{sub:perfect}. Finally, let $L \subset V$ be a subspace such that the
linear sections $X_L:= X \times_{\bbP(V)}\bbP(L^{\perp})$ and $Y_L:= Y \times_{\bbP(Y^\ast)}\bbP(L)$ are of expected dimension
$\mathrm{dim}(X_L)=\mathrm{dim}(X) - \mathrm{dim}(L)$ and $\mathrm{dim}(Y_L)= \mathrm{dim}(Y) - \mathrm{dim}(L^\perp)$, respectively.

\begin{theorem}\label{thm:main-hpd}
Let $X_L$ and $Y_L$ be as above. Under the assumption that $X_L$ and $Y_L$ are smooth, we have $V(X_L) \Leftrightarrow V(Y_L)$.
\end{theorem}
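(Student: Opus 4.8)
The plan is to transport the whole question into the noncommutative setting by means of Theorem~\ref{thm:main1}, and then to feed in Kuznetsov's fundamental theorem of homological projective duality.

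Since $X_L$ and $Y_L$ are smooth projective, Theorem~\ref{thm:main1} identifies $V(X_L)$ with $V_{NC}(\perf_\dg(X_L))$ and $V(Y_L)$ with $V_{NC}(\perf_\dg(Y_L))$, so it suffices to prove $V_{NC}(\perf_\dg(X_L))\Leftrightarrow V_{NC}(\perf_\dg(Y_L))$. Because $L$ is chosen so that $X_L$ and $Y_L$ are of expected dimension, Kuznetsov's main theorem \cite{kuznetsov:hpd} produces a triangulated category $\cC_L$ together with semi-orthogonal decompositions
$$\perf(X_L) = \langle \cC_L,\, \bbA_{c}(1),\, \ldots,\, \bbA_n(n-c+1)\rangle, \qquad \perf(Y_L) = \langle \bbB_{\bullet}(\ast),\, \ldots,\, \bbB_{\bullet}(\ast),\, \cC_L\rangle,$$
where $c=\dim(L)$ and the $\bbB_{\bullet}$ are the blocks of the dual Lefschetz decomposition of $\perf(Y)$ (the index ranges being understood with the conventions of \loccit). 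By the uniqueness of dg enhancements \cite{LO} these lift to semi-orthogonal decompositions of the smooth proper dg categories $\perf_\dg(X_L)$ and $\perf_\dg(Y_L)$ sharing one and the same component $\cC_L^\dg$.

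Next I would invoke the additivity of conjecture $V_{NC}$ with respect to semi-orthogonal decompositions — the mechanism that makes the equivalences in Theorems~\ref{thm:main2}--\ref{thm:main3} possible: for a semi-orthogonal decomposition $\cA=\langle\cB_1,\ldots,\cB_m\rangle$ of smooth proper dg categories, $V_{NC}(\cA)$ holds if and only if $V_{NC}(\cB_i)$ holds for all $i$. On the $X_L$-side this reduces $V_{NC}(\perf_\dg(X_L))$ to $V_{NC}(\cC_L^\dg)$ together with $V_{NC}(\bbA_i^\dg(i))$ for $c\leq i\leq n$. Tensoring by $\cO(i)$ is an equivalence $\bbA_i^\dg\isoto\bbA_i^\dg(i)$, so the twists may be dropped; and the Lefschetz nesting $\bbA_0\supseteq\bbA_1\supseteq\cdots\supseteq\bbA_n$ exhibits each $\bbA_i^\dg$ as an admissible subcategory of $\bbA_0^\dg$, so, since $V_{NC}$ passes to semi-orthogonal components, the hypothesis $V_{NC}(\bbA_0^\dg)$ forces $V_{NC}(\bbA_i^\dg)$ for every $i$. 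Hence $V_{NC}(\perf_\dg(X_L))\Leftrightarrow V_{NC}(\cC_L^\dg)$. Carrying out the parallel analysis on the $Y_L$-side reduces $V_{NC}(\perf_\dg(Y_L))$ to $V_{NC}(\cC_L^\dg)$ as well, and the desired equivalence $V(X_L)\Leftrightarrow V(Y_L)$ follows.

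The main obstacle is making the $Y_L$-side reduction rigorous: the hypothesis says nothing about the Lefschetz core $\bbB_0^\dg$ of the dual variety, so one has to check that every dual Lefschetz block that actually occurs among the components of $\perf(Y_L)$ satisfies $V_{NC}$. The point is that, in the range of subspaces $L$ for which both $X_L$ and $Y_L$ are of expected dimension, the fundamental theorem never produces the full block $\bbB_0$ itself, only the smaller blocks $\bbB_k$ with $k\geq 1$; in the cases of interest here (e.g.\ Grassmannians and determinantal varieties) the dual Lefschetz decomposition is essentially rectangular, so these blocks are generated by exceptional objects and $V_{NC}$ holds for them by additivity together with the trivial instance $V_{NC}(\perf_\dg(k))$. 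Pinning down the exact index ranges in \cite{kuznetsov:hpd}, together with the compatibility of all the semi-orthogonal decompositions with the dg enhancements, is where the genuine work lies; everything else is formal given Theorem~\ref{thm:main1} and the additivity of $V_{NC}$.
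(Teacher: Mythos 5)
Your setup (pass to $V_{NC}$ via Theorem~\ref{thm:main1}, feed in Kuznetsov's fundamental theorem of HPD to get semi-orthogonal decompositions of $\perf(X_L)$ and $\perf(Y_L)$ sharing the component $\bbC_L$, and then appeal to additivity of $V_{NC}$ along Fourier--Mukai semi-orthogonal decompositions) matches the paper, as does the $X_L$-side reduction: each $\bbA_i$ is admissible in $\bbA_0$, so $V_{NC}(\bbA_0^\dg)$ gives $V_{NC}(\bbA_i^\dg)$ for all $i$, and the twists $\bbA_i(i)$ are harmless.

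The genuine gap is exactly the one you flag on the $Y_L$-side, and your proposed way out does not close it. You argue that the surviving blocks $\bbB_k$ are ``generated by exceptional objects'' because ``in the cases of interest (Grassmannians, determinantal varieties) the dual Lefschetz decomposition is essentially rectangular.'' But Theorem~\ref{thm:main-hpd} is the \emph{general} HPD statement, of which the Grassmannian and determinantal theorems are merely special cases; the hypotheses say nothing at all about $\bbB_0$ or about the $\bbB_k$'s being built from exceptional objects, and in general they are not. So as written, the proof only re-proves the specific Theorems~\ref{thm:main-grass} and \ref{thm:main-deter}, not the theorem it is supposed to generalize. What the paper does instead is pass through Kuznetsov's \emph{primitive subcategories}: writing $\mathfrak{a}_i$ for the orthogonal of $\bbA_{i+1}$ in $\bbA_i$, each $\mathfrak{a}_i$ is a semi-orthogonal component of $\bbA_0$, so $V_{NC}(\mathfrak{a}_i^\dg)$ holds. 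Kuznetsov's fundamental theorem (\cite[Thm.~6.3]{kuznetsov:hpd}) identifies, via a Fourier--Mukai functor, the primitive subcategories $\mathfrak{b}_i$ of the dual Lefschetz decomposition with the $\mathfrak{a}_i$; hence $V_{NC}(\mathfrak{b}_i^\dg)$ holds for every $i$. Since $\bbB_m=\mathfrak{b}_m$ and each $\bbB_i$ is glued from $\mathfrak{b}_i$ and $\bbB_{i+1}$, descending induction gives $V_{NC}(\bbB_i^\dg)$ for every $i$, which is precisely the input your $Y_L$-side reduction needs. Replacing the rectangularity heuristic by this primitive-subcategory induction is the missing step.
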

\begin{remark}
Theorem \ref{thm:main-hpd} reduces to Theorem \ref{thm:main-grass} (resp. to Theorem \ref{thm:main-deter}) in the particular case of Grassmanian-Pfaffian
(resp. determinantal) homological projective duality; consult \cite{kuznet-grass} (resp. \cite{berna-bolo-faenzi}) for details.
\end{remark}
\subsection*{Moishezon manifolds}
A {\em Moishezon manifold} $X$ is a compact complex manifold such that the field of meromorphic functions on each component of $X$ has transcendence degree equal to the dimension of the component. As proved by Moishezon \cite{Moishezon}, $X$ is a smooth projective $\bbC$-scheme if and only if it admits a K\"ahler metric. In the remaining cases, Artin \cite{Artin-Moish} showed that $X$ is a proper algebraic space over $\bbC$.

Let $Y \to \bbP^2$ be one of the non-rational conic bundles described by Artin and Mumford in \cite{artin-mumford}, and $X \to Y$ a small
resolution. In this case, $X$ is a smooth (non necessarily projective) Moishezon manifold.

\begin{theorem}\label{thm:main-moish-ex}
Conjecture $V_{NC}(\perf_\dg(X))$ holds for the above resolutions.
\end{theorem}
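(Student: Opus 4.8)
The plan is to write $\perf_\dg(X)$ as a semi-orthogonal gluing of dg categories for which conjecture $V_{NC}$ is already known, and then to conclude by the stability of $V_{NC}$ along semi-orthogonal decompositions together with Theorems~\ref{thm:main1} and~\ref{thm:main2}. The essential preliminary observation is that, although $X$ need not be projective, it is a smooth Moishezon manifold, so by Artin's theorem it is a smooth proper algebraic space over $\bbC$; hence $\perf_\dg(X)$ is a smooth proper dg category and $V_{NC}(\perf_\dg(X))$ is a meaningful statement --- this is exactly the situation the noncommutative reformulation was designed for, since the classical conjecture $V(X)$ does not make sense here.

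First I would set up the geometry. Let $q\colon Y\to\bbP^2$ be the Artin--Mumford conic bundle \cite{artin-mumford}; its discriminant $\Delta\subset\bbP^2$ is a nodal sextic, the total space $Y$ carries an ordinary double point over each node of $\Delta$, and $X\to Y$ replaces each such point by an exceptional $\bbP^1$. Combining the structure theory of conic bundles and their even Clifford algebras (Kuznetsov \cite{kuznetquadrics}, Auel--Bernardara--Bolognesi \cite{auel-bernar-bologn}) with a resolution of the nodes of $\Delta$ --- replacing $\bbP^2$ by a smooth projective surface $S$ over which the induced conic bundle has \emph{smooth} discriminant --- one should obtain a semi-orthogonal decomposition of $D^b(X)$, hence (via Orlov's blow-up and projective-bundle formulas, applied to the canonical dg enhancements) of $\perf_\dg(X)$:
\[
\perf_\dg(X)=\big\langle\, \perf_\dg(\widehat{S},\cB_0),\ \perf_\dg(S),\ \perf_\dg(C_1),\ \ldots,\ \perf_\dg(C_a),\ \perf_\dg(k),\ \ldots,\ \perf_\dg(k)\,\big\rangle,
\]
where $\widehat{S}$ is the square-root stack of $S$ along the smooth discriminant, $\cB_0$ the associated Azumaya algebra (the data addressed by Theorem~\ref{thm:main2}(iii)), the $C_i$ are smooth projective curves, and the trailing $\perf_\dg(k)$'s account for the exceptional objects supported on the resolved $\bbP^1$'s.

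Second I would run the components through the dictionary. Conjecture $V$ holds for curves and for surfaces, hence for $S$, for each $C_i$, and for $\Spec(k)$; by Theorem~\ref{thm:main1} this gives $V_{NC}$ for $\perf_\dg(S)$, $\perf_\dg(C_i)$, and $\perf_\dg(k)$. For the remaining component, $\dim(S)=2\leq 2$, so Theorem~\ref{thm:main2}(iii) yields $V_{NC}(\perf_\dg(\widehat{S},\cB_0))$. Since all these are smooth proper dg categories and $V_{NC}$ of a smooth proper dg category holds if and only if it holds for every component of a semi-orthogonal decomposition (as used throughout the paper, cf.\ the ``$+$'' in Theorems~\ref{thm:main2}--\ref{thm:main3}), we conclude $V_{NC}(\perf_\dg(X))$.

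The main obstacle is the first step: establishing the semi-orthogonal decomposition displayed above when the discriminant is singular and $X$ is only a small, possibly non-projective, resolution. One must check that passing to the conic bundle over $S$ genuinely replaces $\Delta$ by a smooth discriminant while modifying the even Clifford algebra only in a controlled way, that the small resolution contributes nothing beyond a handful of exceptional objects, and that Orlov's formulas and the conic-bundle decomposition are available in the algebraic-space setting rather than just for schemes. Once the decomposition is in hand, the rest --- invoking Theorems~\ref{thm:main1} and~\ref{thm:main2} and the additivity of $V_{NC}$ --- is entirely formal.
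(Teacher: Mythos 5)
The key step of your argument is asserted but not established, and it is not the route the paper takes. You posit that one can resolve the nodal sextic discriminant and pass to a conic bundle over a new base $S$ with smooth discriminant in such a way that $\perf_\dg(X)$ acquires a semi-orthogonal decomposition with pieces $\perf_\dg(\widehat{S},\cB_0)$, $\perf_\dg(S)$, some curves, and exceptional objects. You flag this yourself as ``the main obstacle,'' and indeed it is: blowing up $\bbP^2$ at the nodes of $\Delta$ does not realize $X$ as a quadric fibration over the new base, and a small resolution of the total space does not factor through a base-change of the conic bundle structure, so Kuznetsov's quadric-fibration decomposition (which needs smooth total space and, for the refinements, smooth discriminant) does not obviously apply. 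Without a precise statement and proof of the displayed decomposition, the argument does not go through.

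The paper's proof sidesteps all of this by invoking a specific result of Ingalls--Kuznetsov \cite{ingalls-kuznetsov}: using Cossec's identification of the Artin--Mumford conic bundle with a quartic double solid ramified along a quartic symmetroid $D$ \cite{cossec}, one produces an Enriques surface $S$ from the involution on the resolution of $D$; Zube's decomposition $\perf(S)=\langle \cT_S,\cE_1,\dots,\cE_{10}\rangle$ \cite{zube} gives a smooth proper dg piece $\cT_S^\dg$ (Lemma~\ref{lem:smoothness}) for which $V_{NC}$ holds since $S$ is a surface; and Ingalls--Kuznetsov supply a Fourier--Mukai functor $\Phi$ with $\perf(X)=\langle\Phi(\cT_S),\cE'_1,\cE'_2\rangle$, whence $V_{NC}(\perf_\dg(X))\Leftrightarrow V_{NC}(\Phi(\cT_S)^\dg)\Leftrightarrow V_{NC}(\cT_S^\dg)$. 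So the additivity-of-$V_{NC}$-along-semi-orthogonal-decompositions principle you invoke is correct and is what the paper uses, but the decomposition itself must come from the Enriques-surface picture, not from a resolution of the conic-bundle discriminant.
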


\begin{remark}
The proofs of Theorems \ref{thm:main2}, \ref{thm:main3}, \ref{thm:main-grass}, \ref{thm:main-deter}, \ref{thm:main-hpd}, and \ref{thm:main-moish-ex} are based on the study of a smooth projective
$k$-scheme (or algebraic space) $X$ via semi-orthogonal decompositions of its category of perfect complexes $\perf(X)$; see Bondal-Orlov \cite{BO}
and Kuznetsov \cite{kuz:ICM2014} for instance. This approach allows the reduction of Voevodsky's conjecture $V(X)$ to several noncommutative conjectures
$V_{NC}$ - one for each piece of the semi-orthogonal decomposition. We believe this provides a new tool for the proof of Voevodsky's conjecture as well as of 
its generalization to algebraic spaces. 
\end{remark}

\section{Preliminaries}
\subsection{Dg categories}\label{sub:dg}
A {\em differential graded (=dg) category} $\cA$ is a category enriched over dg $k$-vector spaces; consult Keller \cite{ICM} for details. For example, 
every (dg) $k$-algebra $A$ gives naturally rise to a dg category $\underline{A}$ with a single object. Let $\dgcat$ be the category 
of small dg categories. Recall from \cite[\S3]{ICM} the construction of the derived category $\cD(\cA)$
of $\cA$. This triangulated category admits arbitrary
direct sums and we will write $\cD_c(A)$ for the full subcategory of compact objects. A dg functor $\cA \to \cB$ is called
a {\em 
Morita equivalence} if it induces an equivalence $\cD(\cA) \stackrel{\sim}{\to} \cD(\cB)$. Finally, let us write $\cA\otimes \cB$ for the tensor product  of dg categories.
\subsection{Perfect complexes}\label{sub:perfect}
Given a stack $\cX$ and a sheaf of $\cO_{\cX}$-algebras $\cG$, let $\mathrm{Mod}(\cX,\cG)$ be the Grothendieck category of sheaves of (right) $\cG$-modules, $\cD(\cX,\cG):= \cD(\mathrm{Mod}(\cX,\cG))$ the derived category of $\cG$, and $\perf(\cX,\cG)$ the subcategory of perfect complexes. As explained in \cite[\S4.4]{ICM}, the derived category $\cD_\dg(\cE x)$ of an abelian (or exact) category $\cE x$ is defined as the (Drinfeld's) dg quotient $\cC_\dg(\cE x)/\mathrm{Ac}_\dg(\cE x)$ of the dg category of complexes over $\cE x$ by its full dg subcategory of acyclic complexes. Hence, let us write $\cD_\dg(\cX,\cG)$ for the dg category $\cD_\dg(\cE x )$ with $\cE x:= \mathrm{Mod}(\cX,\cG)$ and $\perf_\dg(\cX,\cG)$ for the full dg subcategory of perfect complexes.
\begin{lemma}\label{lem:smoothness}
Let $X$ be a smooth projective $k$-scheme and $\perf(X) =\langle \cT_1, \ldots, \cT_n\rangle$ a semi-orthogonal decomposition. In this case, the dg categories
$\cT_i^\dg$ (where $\cT_i^\dg$ stands for the dg enhancement of $\cT_i$ induced from $\perf_\dg(X)$) are smooth and proper. 
\end{lemma}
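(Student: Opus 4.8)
The plan is to treat properness and smoothness separately. \emph{Properness} is immediate from the construction: $\cT_i^\dg$ is by definition the full dg subcategory of $\perf_\dg(X)$ spanned by the objects of $\cT_i$, so $\cT_i^\dg(x,y)=\perf_\dg(X)(x,y)$ for all $x,y\in\cT_i$; since $X$ is projective we have $\sum_j\dim H^j\perf_\dg(X)(x,y)=\sum_j\dim\Hom_{\perf(X)}(x,y[j])<\infty$, and hence $\cT_i^\dg$ is proper. No obstacle arises here.

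For \emph{smoothness} I would use a retract argument inside the symmetric monoidal category $\Hmo_0$ of dg categories localized at Morita equivalences (equivalently, inside the category of noncommutative Chow motives), which is legitimate because smoothness and properness are Morita invariant. The first input is the classical fact that, $X$ being smooth and projective, $\perf_\dg(X)$ is itself smooth and proper, i.e. --- by Kontsevich, see Cisinski-Tabuada --- a dualizable object of $\Hmo_0$. The second step is to promote the given semi-orthogonal decomposition $\perf(X)=\langle\cT_1,\ldots,\cT_n\rangle$ to the dg level: each $\cT_i$ is an admissible subcategory of $\perf(X)$, so the dg inclusion $\iota\colon\cT_i^\dg\hookrightarrow\perf_\dg(X)$ admits a projection quasi-functor $\pi$ (an adjoint of $\iota$) with $\pi\circ\iota\simeq\id_{\cT_i^\dg}$; after passing to $\Hmo_0$ this exhibits $\cT_i^\dg$ as a direct summand of $\perf_\dg(X)$. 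The third step is the formal observation that dualizable objects of a symmetric monoidal category are stable under retracts; applying it, $\cT_i^\dg$ is dualizable in $\Hmo_0$, i.e. smooth and proper.

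The main obstacle --- indeed the only non-formal ingredient --- is the second step: one must know that the projection functors of a semi-orthogonal decomposition are genuinely realized by dg bimodules on the fixed enhancement $\perf_\dg(X)$, and that the relation $\pi\circ\iota\simeq\id$ persists in $\Hmo_0$. This is precisely what the theory of semi-orthogonal decompositions (or gluings) of dg categories supplies, yielding a decomposition $\perf_\dg(X)\simeq\langle\cT_1^\dg,\ldots,\cT_n^\dg\rangle$ of the enhancement; I would cite Kuznetsov-Lunts (and Orlov) for this. Alternatively, the entire argument can be replaced by a direct citation of the statement that an admissible dg subcategory of a smooth proper dg category is again smooth proper. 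Everything else reduces to routine bookkeeping with the definitions of \S\ref{sub:dg}-\ref{sub:perfect}.
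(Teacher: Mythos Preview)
Your proposal is correct and follows essentially the same approach as the paper: exhibit $\cT_i^\dg$ as a retract of $\perf_\dg(X)$ in the Morita homotopy category via the inclusion and projection functors of the semi-orthogonal decomposition, then invoke the characterization of smooth proper dg categories as the dualizable objects (Cisinski--Tabuada) together with stability of dualizability under retracts. The only differences are cosmetic: you give a separate elementary argument for properness (the paper simply lets dualizability handle both at once) and you work in $\Hmo_0$ rather than $\Ho(\dgcat)$, but the core retract-of-dualizable argument is identical.
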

\begin{proof}
Let $\Ho(\dgcat)$ be the localization of $\dgcat$ with respect to the class of Morita equivalences. The tensor product of dg categories gives rise to a symmetric monoidal 
structure on $\dgcat$ which descends to $\Ho(\dgcat)$. Moreover, as proved
in \cite[Thm.~5.8]{CT1}, the smooth and proper dg categories can be characterized as those objects of $\Ho(\dgcat)$ which are dualizable. Note that the canonical inclusion
$\cT^\dg_i \hookrightarrow \perf_\dg(X)$ and projection $\perf_\dg(X) \to \cT^\dg_i$ dg functors express $\cT_i^\dg$ as a direct factor of $\perf_\dg(X)$ in $\Ho(\dgcat)$.
Hence, since $\perf_\dg(X)$ is smooth and proper, we  conclude that $\cT_i^\dg$ is also smooth and proper.
\end{proof}
\subsection{$\otimes$-nilpotence equivalence relation}\label{sub:nilpotence}
Let $\cA$ be a dg category. An element $[M]$ of the Grothendieck group $K_0(\cA):=K_0(\cD_c(\cA))$ is called {\em $\otimes$-nilpotent} if there exists
an integer $n >0$ such that $[M^{\otimes n}]=0$ in $K_0(\cA^{\otimes n})$. This gives rise to a well-defined equivalence relation
$\sim_{\otimes \mathrm{nil}}$ on $K_0(\cA)$ and on its $F$-linearization $K_0(\cA)_F$.
\subsection{Numerical equivalence relation}\label{sub:numerical}
Let $\cA$ be a smooth proper dg category. As explained in \cite[\S4]{Kontsevich}, the pairing $(M,N) \mapsto \sum_i (-1)^i \mathrm{dim}\, \Hom_{\cD_c(\cA)}(M,N[i])$ gives rise to a well-defined bilinear form $\chi(-,-)$ on $K_0(\cA)$. Moreover, the left and right kernels of $\chi(-,-)$ are the same. An element
$[M]$ of the Grothendieck group $K_0(\cA)$ is called {\em numerically trivial} if $\chi([M],[N])=0$ for all $[N] \in K_0(\cA)$. This gives rise 
to an equivalence relation $\sim_{\mathrm{num}}$ on $K_0(\cA)$ and consequently on $K_0(\cA)_F$. When $\cA=\perf_\dg(X)$, with $X$ a smooth projective $k$-scheme,
and $F=\bbQ$ this equivalence relation reduces, via the Chern character $K_0(X)_\bbQ \stackrel{\sim}{\to} CH^\ast(X)_\bbQ$, to the classical numerical equivalence relation
on the Chow ring $CH^\ast(X)_\bbQ$.
\subsection{Motives}\label{sub:motives}
We assume the reader is familiar with the categories of Chow motives $\Chow(k)_F$ and numerical motives $\Num(k)_F$; see \cite[\S4]{Andre}. The 
Tate motive will be denoted $F(1)$. In the same vein, we assume some familiarity with the categories of noncommutative Chow motives $\NChow(k)_F$ 
and noncommutative numerical motives $\NNum(k)_F$; consult the surveys \cite[\S 2-3]{survey} \cite[\S4]{survey-1} and the references therein.
Recall from {\em loc.} {\em cit.} that $\NNum(k)_F$ is the idempotent completion of the quotient of $\NChow(k)_F$ by its largest $\otimes$-ideal\footnote{Different from the entire category $\NChow(k)_F$.}, and that $\Hom_{\NChow(k)_F}(\underline{k},\cA)\simeq K_0(\cA)_F$.
\section{Orbit categories and $\otimes$-nilpotence}\label{sec:nil-orbit}
Let $\cC$ be an $F$-linear additive rigid symmetric monoidal category.
\subsection*{Orbit categories}
Given a $\otimes$-invertible object $\cO \in \cC$, recall from \cite[\S7]{CvsNC} the construction of the orbit category $\cC\!/_{\!\!-\otimes \cO}$.
It has the same objects as $\cC$ and morphisms
$$ \Hom_{\cC\!/_{\!\!-\otimes \cO}}(a,b) := \oplus_{j \in \bbZ} \Hom_{\cC}(a,b \otimes \cO^{\otimes j})\,.$$   
The composition law is induced from $\cC$. By construction, $\cC\!/_{\!\!-\otimes \cO}$ is $F$-linear, additive, and comes equipped with a 
canonical projection functor $\pi: \cC \to \cC\!/_{\!\!-\otimes \cO}$. Moreover, $\pi$ is endowed with a natural $2$-isomorphism 
$\pi \circ (-\otimes \cO) \stackrel{\sim}{\Rightarrow}\pi$ and is $2$-universal among all such functors.  As proved in \cite[Lem.~7.3]{CvsNC},
$\cC\!/_{\!\!-\otimes \cO}$ inherits from $\cC$ a symmetric monoidal structure making $\pi$ symmetric monoidal. On objects it is the same. 
On morphisms it is defined as the unique bilinear pairing
$$ \underset{j \in \bbZ}{\oplus} \Hom_\cC(a,b\otimes \cO^{\otimes j}) \times \underset{j \in \bbZ}{\oplus} \Hom_\cC(c,d\otimes \cO^{\otimes j})
\too \underset{j \in \bbZ}{\oplus} \Hom_\cC(a\otimes c,(b\otimes d) \otimes \cO^{\otimes j})$$
which sends the pair ($a \stackrel{f_r}{\to} b \otimes \cO^{\otimes r}, c \stackrel{g_s}{\to} d \otimes \cO^{\otimes s})$ to
$$ (f\otimes g)_{(r+s)}: a\otimes c \stackrel{f_r \otimes g_s}{\too} b \otimes \cO^{\otimes r} \otimes d \otimes \cO^{\otimes s} \simeq
(b\otimes d) \otimes \cO^{\otimes(r +s)}\,.$$
\subsection*{$\otimes$-nilpotence}
The $\onil$-ideal of $\cC$ is defined as 
$$ \onil(a,b):= \{ f \in \Hom_\cC(a,b) \, |\, f^{\otimes n}=0 \,\,\mathrm{for} \,\, n \gg 0\}\,.$$
By construction, $\onil$ is a $\otimes$-ideal. Moreover, all its ideals $\onil(a,a)\subset \Hom_\cC(a,a)$ are nilpotent; see
\cite[Lem.~7.4.2 (ii)]{AK}. As a consequence, the $\otimes$-functor $\cC \to \cC\!/\!\onil$ is not only $F$-linear and additive but 
moreover conservative. Furthermore, since idempotents can be lifted along nilpotent ideals (see \cite[\S III Prop.~2.10]{Bass}), $\cC\!/\!\onil$ 
is idempotent complete whenever $\cC$ is idempotent complete.
\subsection*{Compatibility}
Let $\cC$ be a category and $\cO \in \cC$ a $\otimes$-invertible objects as above.
\begin{proposition}\label{prop:orbit}
There exists a canonical $F$-linear additive $\otimes$-equivalence $\theta$ making the following diagram commute:
\begin{equation}\label{eq:diagram-key}
\xymatrix@C=2em@R=2em{
\cC\!/\!\onil \ar[d] & \cC \ar[l] \ar[r] & \cC\!/_{\!\!-\otimes \cO} \ar[d] \\
(\cC\!/\!\onil)\!/_{\!\!-\otimes \cO} \ar[rr]^-\simeq_-\theta && (\cC\!/_{\!\!-\otimes \cO})\!/\!\!\onil\,.
}
\end{equation}
\end{proposition}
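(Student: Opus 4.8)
The plan is to construct the functor $\theta$ objectwise as the identity (both categories have the same objects as $\cC$) and then define it on morphisms by unscrambling the two nested constructions, which produce, up to reordering a direct sum, the same $F$-vector spaces of morphisms. Concretely, a morphism $a \to b$ in $(\cC\!/\!\onil)\!/_{\!\!-\otimes\cO}$ is an element of $\oplus_{j\in\bbZ}\Hom_{\cC\!/\!\onil}(a, b\otimes\cO^{\otimes j})$, while a morphism in $(\cC\!/_{\!\!-\otimes\cO})\!/\!\onil$ is a class in $\Hom_{\cC\!/_{\!\!-\otimes\cO}}(a,b) = \oplus_{j\in\bbZ}\Hom_\cC(a,b\otimes\cO^{\otimes j})$ modulo the $\onil$-ideal of $\cC\!/_{\!\!-\otimes\cO}$. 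The first space is $\oplus_{j} \big(\Hom_\cC(a,b\otimes\cO^{\otimes j})/\onil_\cC(a,b\otimes\cO^{\otimes j})\big)$, and I would define $\theta$ to be the evident map sending the $j$-th component $[f_j]$ to the class of $f_j$ in the second space.

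The key steps, in order, are as follows. First I would fix notation for both composite categories and write down the two morphism spaces explicitly, as above. Second, I would check that the map just described is well-defined: one must verify that if $f_j$ lies in $\onil_\cC(a, b\otimes\cO^{\otimes j})$ then, viewed as a morphism in $\cC\!/_{\!\!-\otimes\cO}$, it lies in $\onil_{\cC\!/_{\!\!-\otimes\cO}}(a,b)$ — this is immediate since $f_j^{\otimes n} = 0$ in $\cC$ forces $(f_j)^{\otimes n}=0$ in $\cC\!/_{\!\!-\otimes\cO}$ because $\pi$ is a symmetric monoidal $\otimes$-functor. Third, I would verify that $\theta$ is bijective on Hom-sets: surjectivity is clear since every homogeneous element of $\Hom_{\cC\!/_{\!\!-\otimes\cO}}(a,b)$ comes from some $\Hom_\cC(a,b\otimes\cO^{\otimes j})$; for injectivity I need that the $\onil$-ideal of $\cC\!/_{\!\!-\otimes\cO}$, intersected with the degree-$j$ summand, is exactly $\onil_\cC$ in that summand — here I would use that $\onil$ is a $\otimes$-ideal together with the explicit description of tensor products in $\cC\!/_{\!\!-\otimes\cO}$ recalled in the excerpt, noting that the tensor power of a degree-$r$ morphism stays in a single graded degree. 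Fourth, I would check that $\theta$ is $F$-linear, additive, compatible with composition, and symmetric monoidal — all of which follow mechanically from the fact that on the nose $\theta$ is induced by the identity functor on $\cC$ together with the universal properties of both the orbit-category and the $\onil$-quotient constructions. Finally, commutativity of diagram~\eqref{eq:diagram-key} is built into the construction, since all four outer functors are, at the level of morphisms, the obvious inclusions/projections.

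A cleaner way to organize the last point is to invoke the relevant universal properties directly: $(\cC\!/\!\onil)\!/_{\!\!-\otimes\cO}$ is $2$-universal among $F$-linear additive symmetric monoidal functors out of $\cC\!/\!\onil$ equipped with a trivialization of $-\otimes\cO$, and dually $(\cC\!/_{\!\!-\otimes\cO})\!/\!\onil$ is the (conservative) quotient of $\cC\!/_{\!\!-\otimes\cO}$ by its $\onil$-ideal. Chasing the composite $\cC \to \cC\!/_{\!\!-\otimes\cO} \to (\cC\!/_{\!\!-\otimes\cO})\!/\!\onil$ one sees it kills $\onil_\cC$ (again because $\pi$ is monoidal) and carries a canonical trivialization of $-\otimes\cO$, hence factors through the upper-left and lower-left corners to produce $\theta$; the inverse is produced symmetrically. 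I would present whichever of the two arguments is shorter in the final writeup.

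The main obstacle I anticipate is the injectivity check in step three: one has to be sure that passing to the orbit category does not create new $\otimes$-nilpotent morphisms in a fixed graded degree beyond those already nilpotent in $\cC$. This hinges on the precise formula for the monoidal structure on $\cC\!/_{\!\!-\otimes\cO}$ — namely that $(f\otimes g)_{(r+s)}$ sits in graded degree $r+s$ — so that the $n$-th tensor power of a homogeneous degree-$j$ morphism is homogeneous of degree $nj$ and its vanishing in $\cC\!/_{\!\!-\otimes\cO}$ is equivalent to vanishing of the corresponding morphism in $\cC$. Everything else is bookkeeping with the two universal properties.
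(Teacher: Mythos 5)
Your approach matches the paper's essentially line for line: the paper also constructs $\theta$ via the universal properties (the composite $\cC \to \cC\!/_{\!\!-\otimes\cO} \to (\cC\!/_{\!\!-\otimes\cO})\!/\!\onil$ kills $\onil_\cC$ and carries a trivialization of $-\otimes\cO$), notes that on morphisms $\theta$ sends $\{[f_j]\}_{j}$ to $[\{f_j\}_{j}]$, observes fullness is clear, and leaves faithfulness as an exercise. You have in fact tried to fill in that exercise, which is welcome, but the reduction you state for injectivity is slightly too weak: it is not enough to check that $\onil_{\cC/_{-\otimes\cO}}$ intersected with the degree-$j$ summand equals $\onil_\cC$ in that summand, because a general element $\sum_j f_j$ of $\onil_{\cC/_{-\otimes\cO}}$ does not live in a single graded degree. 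What is actually needed is that $\onil_{\cC/_{-\otimes\cO}}$ is a \emph{graded} ideal, i.e.\ $\sum_j f_j \in \onil_{\cC/_{-\otimes\cO}}$ forces every component $f_j \in \onil_\cC$. This follows by a leading-term argument: if $r$ is the top degree with $f_r \neq 0$, then the degree-$nr$ component of $(\sum_j f_j)^{\otimes n}$ is exactly $f_r^{\otimes n}$ (up to the unit symmetry isomorphisms), so vanishing of the $n$th tensor power forces $f_r \in \onil_\cC$; since $\onil$ is closed under subtraction one may peel $f_r$ off and induct on the number of nonzero components. Your observation that homogeneous tensor powers stay homogeneous is the right ingredient, but you should say explicitly how to handle the non-homogeneous case. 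Likewise, producing the inverse ``symmetrically'' via universal properties requires the same observation in disguise: one must know that $(\cC\!/\!\onil)\!/_{\!\!-\otimes\cO}$ has no nonzero $\otimes$-nilpotent morphisms, which again comes from the leading-term argument applied in $\cC\!/\!\onil$. With that one point sharpened, your write-up is complete and in fact more detailed than the paper's.
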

\begin{proof}
The existence of the $F$-linear additive $\otimes$-functor $\theta$ follows from the fact that
\begin{equation}\label{eq:composed}
\cC \too \cC\!/_{\!\!-\otimes \cO} \too (\cC\!/_{\!\!-\otimes \cO})\!/\!\onil
\end{equation}
vanishes on the $\onil$-ideal and also from the natural $2$-isomorphism between $\eqref{eq:composed} \circ (-\otimes \cO)$ and $\eqref{eq:composed}$. Note that the functor $\theta$ is the identity on objects and sends $\{[f_j]\}_{j \in \bbZ}$ to $[\{f_j\}_{j \in \bbZ}]$. Clearly, it is full. The faithfulness is left as an exercise.
\end{proof}
\section{$\otimes$-nilpotence of motives}
By construction, the categories $\Chow(k)_F$ and $\NChow(k)_F$ are $F$-linear, additive, rigid symmetric monoidal, and idempotent complete. 
Let us denote by 
\begin{eqnarray*}
\Voev(k)_F:= \Chow(k)_F\!/\onil & \mathrm{and} & \NVoev(k)_F := \NChow(k)_F\!/\onil
\end{eqnarray*}
the associated quotients. They fit in the following sequences
\begin{eqnarray*}
\Chow(k)_F \to \Voev(k)_F \to \Num(k)_F & \NChow(k)_F \to \NVoev(k)_F \to \NNum(k)_F\,.&
\end{eqnarray*}
The relation between all these motivic categories is the following:
\begin{proposition}\label{prop:compatibility}
There exist $F$-linear additive fully-faithful $\otimes$-functors $R, R_{\onil}, R_\cN$ making the following diagram commute:
\begin{equation}\label{diag:main}
\xymatrix@C=2em@R=2em{
\Chow(k)_F \ar[d] \ar[r]^-\pi &  \Chow(k)_F\!/_{\!\!-\otimes F(1)}  \ar[d]  \ar[r]^-R&  \NChow(k)_F\ar[d]  \\
\Voev(k)_F \ar[d]  \ar[r]^-\pi& \Voev(k)_F\!/_{\!\!-\otimes F(1)} \ar[d] \ar[r]^-{R_{\onil}}&  \NVoev(k)_F \ar[d]\\
\Num(k)_F  \ar[r]^-\pi& \Num(k)_F\!/_{\!\!-\otimes F(1)} \ar[r]_-{R_\cN} & \NNum(k)_F\,. \\
}
\end{equation}
\end{proposition}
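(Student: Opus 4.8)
The plan is to treat the outer functor $R$ and the bottom functor $R_\cN$ as essentially known, and to manufacture the middle functor $R_\onil$ from them. Recall that $R\colon \Chow(k)_F\!/_{\!\!-\otimes F(1)} \to \NChow(k)_F$ is the $F$-linear additive fully-faithful $\otimes$-functor bridging commutative and noncommutative Chow motives constructed in \cite{CvsNC}; the commutativity of the upper square of \eqref{diag:main} is part of its construction. Likewise, $R_\cN\colon \Num(k)_F\!/_{\!\!-\otimes F(1)} \to \NNum(k)_F$ is the fully-faithful functor of Marcolli--Tabuada (see the survey \cite[\S4]{survey-1}), whose existence rests on the compatibility of $R$ with the numerical equivalence relation; the commutativity of the lower square is again built into \loccit. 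It therefore remains to construct $R_\onil$ and to check the commutativity of the two middle squares, the vertical arrows being the canonical projections ($\Chow \to \Voev \to \Num$ on the left, their orbit-category versions in the middle, $\NChow \to \NVoev \to \NNum$ on the right).

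For $R_\onil$, I would first apply Proposition~\ref{prop:orbit} with $\cC = \Chow(k)_F$ and $\cO = F(1)$, which supplies the $F$-linear additive $\otimes$-equivalence
$$\theta\colon \Voev(k)_F\!/_{\!\!-\otimes F(1)} = (\Chow(k)_F\!/\onil)\!/_{\!\!-\otimes F(1)} \;\isoto\; (\Chow(k)_F\!/_{\!\!-\otimes F(1)})\!/\onil\,,$$
together with the commuting square~\eqref{eq:diagram-key}. Next, since $R$ is a $\otimes$-functor, its monoidal constraints identify $R(f)^{\otimes n}$ with $R(f^{\otimes n})$ for every morphism $f$ of $\Chow(k)_F\!/_{\!\!-\otimes F(1)}$ and every $n$; because $R$ is moreover faithful, $f$ lies in the $\onil$-ideal if and only if $R(f)$ does. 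Hence $R$ descends to an $F$-linear additive fully-faithful $\otimes$-functor $\overline{R}\colon (\Chow(k)_F\!/_{\!\!-\otimes F(1)})\!/\onil \to \NChow(k)_F\!/\onil = \NVoev(k)_F$, and one sets $R_\onil := \overline{R}\circ \theta$. Fullness of $\overline{R}$ is immediate from fullness of $R$ and surjectivity of the $\onil$-quotient functors; faithfulness is exactly the ``only if'' direction just recorded.

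The commutativity of the upper-middle square then follows by unwinding the definitions. On one hand $q\circ R = \overline{R}\circ p$ by construction of $\overline{R}$, where $q\colon \NChow(k)_F \to \NVoev(k)_F$ and $p\colon \Chow(k)_F\!/_{\!\!-\otimes F(1)} \to (\Chow(k)_F\!/_{\!\!-\otimes F(1)})\!/\onil$ are the quotient functors. On the other hand, by \eqref{eq:diagram-key} the middle vertical functor $\Chow(k)_F\!/_{\!\!-\otimes F(1)} \to \Voev(k)_F\!/_{\!\!-\otimes F(1)}$ equals $\theta^{-1}\circ p$, so postcomposing with $R_\onil = \overline{R}\circ\theta$ again gives $\overline{R}\circ p$. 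The lower-middle square is treated in the same spirit: $R_\cN$ is in turn induced from $R_\onil$ by passing to the quotient by the image of the numerical ideal and idempotent-completing, and one uses once more the compatibility of $R$ with numerical equivalence.

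The hard part is the faithfulness of the induced functors, i.e. the claim that $R$ transports the relevant $\otimes$-ideal \emph{exactly} rather than merely \emph{into} its target. For $R_\onil$ this is painless: the $\onil$-ideal is ``local'', being defined object-by-object through vanishing of tensor powers, and $R$ is a faithful $\otimes$-functor, so the two-line argument above suffices. For $R_\cN$ the difficulty is real, since the numerical ideal $\cN$ is not visibly local; showing that $f \in \cN$ if and only if $R(f) \in \cN$ — equivalently, that $R$ respects the Euler pairings up to the appropriate Tate-twist bookkeeping — is precisely the content of the Marcolli--Tabuada result, which I would invoke rather than reprove.
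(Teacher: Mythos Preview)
Your proposal is correct and follows essentially the same route as the paper: cite the existence of the fully-faithful $\otimes$-functors $R$ and $R_\cN$ from prior work (the paper packages both into \cite[Thm.~1.13]{AMJ}), invoke Proposition~\ref{prop:orbit} to obtain the equivalence $\theta$, and set $R_\onil := (R/\onil)\circ\theta$ (your $\overline{R}$ is the paper's $R/\onil$). The only cosmetic difference is in the handling of the lower-middle square: the paper takes $R_\cN$ as already given and deduces commutativity from the identification of $\Num(k)_F\!/_{\!\!-\otimes F(1)}$ with the quotient of $\Chow(k)_F\!/_{\!\!-\otimes F(1)}$ by its largest $\otimes$-ideal (\cite[Prop.~3.2]{AMJ}), whereas you phrase it as inducing $R_\cN$ from $R_\onil$; both amount to the same Marcolli--Tabuada input you correctly flag as the nontrivial point.
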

\begin{proof}
The outer commutative square, with $R, R_\cN$ $F$-linear additive fully-faithful $\otimes$-functors, was built in \cite[Thm.~1.13]{AMJ}. Consider now the ``zoomed'' diagram:
\begin{equation}\label{eq:diagram-aux}
\xymatrix@C=2em@R=2em{
\Chow(k)_F\!/_{\!\!-\otimes F(1)} \ar[d]  \ar@{=}[r] & \Chow(k)_F\!/_{\!\!-\otimes F(1)} \ar[r]^-R\ar[d]&  \NChow(k)_F\ar[d]  \\
\Voev(k)_F\!/_{\!\!-\otimes F(1)} \ar[d] \ar[r]^-\simeq_-\theta&(\Chow(k)_F\!/_{\!\!-\otimes F(1)})\!/\!\onil \ar[r]^-{R/\!\onil} \ar[d]&  \NVoev(k)_F \ar[d]\\
\Num(k)_F\!/_{\!\!-\otimes F(1)} \ar@{=}[r] &\Num(k)_F\!/_{\!\!-\otimes F(1)} \ar[r]_-{R_\cN} & \NNum(k)_F\,. \\
}
\end{equation}
By definition, $R_{\onil}:=R/\!\onil \circ\, \theta$. Hence, since $R$ is an $F$-linear additive fully-faithful $\otimes$-functor, we conclude that $R_{\onil}$ is also an $F$-linear additive fully-faithful $\otimes$-functor. The 
commutativity of the bottom squares of diagram \eqref{eq:diagram-aux} follows from the fact that $\Num(k)_F\!/_{\!\!-\otimes F(1)}$ 
identifies with the quotient of $\Chow(k)_F\!/_{\!\!-\otimes F(1)}$ by its largest $\otimes$-ideal $\cN$; consult \cite[Prop.~3.2]{AMJ} for details. 
\end{proof}
\section{Proof of Theorem~\ref{thm:main1}}\label{sec:Proof1}
Note first that we have the following natural isomorphisms
\begin{eqnarray*}
\Hom_{\Voev(k)_F\!/_{\!\!-\otimes F(1)}}(\mathrm{Spec}(k),X) & \simeq & \cZ^\ast(X)_F/\!\!\sim_{\otimes\mathrm{nil}} \\
\Hom_{\Num(k)_F\!/_{\!\!-\otimes F(1)}}(\mathrm{Spec}(k),X) & \simeq & \cZ^\ast(X)_F/\!\!\sim_{\mathrm{num}}\,.
\end{eqnarray*}
As a consequence, conjecture $V(X)$ becomes equivalent to the injectivity of
\begin{equation}\label{eq:induced-1}
\Hom_{\Voev(k)_F\!/_{\!\!-\otimes F(1)}}(\mathrm{Spec}(k),X) \twoheadrightarrow \Hom_{\Num(k)_F\!/_{\!\!-\otimes F(1)}}(\mathrm{Spec}(k),X)\,.
\end{equation}
Given a smooth and proper dg category $\cA$, we have also natural isomorphisms
\begin{eqnarray*}
\Hom_{\NVoev(k)_F}(\underline{k},\cA)\simeq K_0(\cA)_F/\!\!\sim_{\otimes \mathrm{nil}} &&
\Hom_{\NNum(k)_F}(\underline{k},\cA) \simeq K_0(\cA)_F/\!\!\sim_{\mathrm{num}}\,.
\end{eqnarray*} 
Hence, conjecture $V_{NC}(\cA)$ becomes equivalent to the injectivity of
\begin{equation}\label{eq:induced-2}
\Hom_{\NVoev(k)_F}(\underline{k},\cA) \twoheadrightarrow \Hom_{\NNum(k)_F}(\underline{k},\cA)\,.
\end{equation}
Now, recall from \cite[Thm.~1.1]{CvsNC} that the image of $X$ under the composed functor $R \circ \pi$ identifies naturally with the noncommutative 
Chow motive $\perf_\dg(X)$. Similarly, the image of $\mathrm{Spec}(k)$ under $R \circ \pi$ identifies with $\perf_\dg(\mathrm{Spec}(k))$ which is 
Morita equivalent to $\underline{k}$. As a consequence, since the functors $R_{\otimes_\mathrm{nil}}$ and $R_\cN$ are fully-faithful, the bottom 
right-hand side square of diagram \eqref{diag:main} gives rise to the following commutative diagram:
$$
\xymatrix{
\Hom_{\Voev(k)_F\!/_{\!\!-\otimes F(1)}}(\mathrm{Spec}(k), X) \ar@{->>}[d]_-{\eqref{eq:induced-1}} \ar[r]^-\simeq & \Hom_{\NVoev(k)_F}(\underline{k}, \perf_\dg(X)) \ar@{->>}[d]^-{\eqref{eq:induced-2}} \\
\Hom_{\Num(k)_F\!/_{\!\!-\otimes F(1)}} (\mathrm{Spec}(k),X) \ar[r]_-\simeq & \Hom_{\NNum(k)_F}(\underline{k}, \perf_\dg(X))\,.
}
$$
Using the above reformulations of conjectures $V$ and $V_{NC}$, we conclude finally that conjecture $V(X)$ is equivalent to 
conjecture $V_{NC}(\perf_\dg(X))$.
\section{Proof of Theorem~\ref{thm:main2}}
{\bf Item (i)}. As proved by Kuznetsov in \cite[Thm.~4.2]{kuznetquadrics}, one has the following semi-orthogonal decomposition 
$$ \perf(Q)=\langle\perf(S,\cC_0), \perf(S)_1, \ldots, \perf(S)_n \rangle$$
with $\perf(S)_i:= q^\ast \perf(S) \otimes \cO_{Q/S}(i)$. Note that $\perf(S)_i \simeq \perf(S)$ for every $i$. Using \cite[Prop.~3.1]{bernardara-tabuada-paranj}, one then obtains a direct sum decomposition in $\NChow(k)_F$
\begin{equation}\label{eq:decomp}
\perf_\dg(Q)\simeq \perf^\dg(S,\cC_0) \oplus \underbrace{\perf_\dg(S) \oplus \cdots \oplus \perf_\dg(S)}_{n\,\,\text{copies}}\,,
\end{equation}
where $\perf^\dg(S,\cC_0)$ stands for the dg enhancement of $\perf(S,\cC_0)$ induced from $\perf_\dg(Q)$. Note that thanks to Lemma~\ref{lem:smoothness}, the dg category $\perf^\dg(S,\cC_0)$ is smooth and proper. Since the inclusion of categories $\perf(S,\cC_0) \hookrightarrow \perf(Q)$ is of Fourier-Mukai type
(see \cite[Prop. 4.9]{kuznetquadrics}), its kernel $\cK \in \perf(S \times Q, \cC_0^\op \boxtimes \cO_X)$ gives rise to a Fourier-Mukai
Morita equivalence $\Phi^\cK_\dg: \perf_\dg(S,\cC_0) \to \perf^\dg(S,\cC_0)$. Hence, we can replace in the above decomposition \eqref{eq:decomp} the dg category $\perf^\dg(S,\cC_0)$ by the canonical one $\perf_\dg(S,\cC_0)$ (see \S\ref{sub:perfect}). Finally, using the above description \eqref{eq:induced-2} of the noncommutative nilpotence conjecture, one concludes that 
\begin{equation}\label{eq:equivalences}
V_{NC}(\perf_\dg(Q)) \Leftrightarrow V_{NC}(\perf_\dg(S,\cC_0)) + V_{NC}(\perf_\dg(S))\,.
\end{equation}
The proof follows now automatically from Theorem~\ref{thm:main1}.

{\bf Item (ii)}. As proved by Kuznetsov in \cite[Prop. 3.13]{kuznetquadrics}, $\perf(S,\cC_0)$ is Fourier-Mukai equivalent
to $\perf(\widetilde{S},\cB_0)$. Hence, the above equivalence \eqref{eq:equivalences} reduces to
\begin{equation}\label{eq:equivalences-1}
V_{NC}(\perf_\dg(Q)) \Leftrightarrow V_{NC}(\perf_\dg(\widetilde{S},\cB_0)) + V_{NC}(\perf_\dg(S))\,.
\end{equation}
Since $\cB_0$ is a sheaf of Azumaya algebras and $F$ is of characteristic zero, the canonical dg functor $\perf_\dg(\widetilde{S}) \to \perf_\dg(\widetilde{S}, \cB_0)$ becomes an isomorphism in $\NChow(k)_F$; see \cite[Thm.~2.1]{VdB}. Consequently, conjecture $V_{NC}(\perf_\dg(\widetilde{S},\cB_0))$ reduces to conjecture $V_{NC}(\perf_\dg(\widetilde{S}))$. The proof follows now from Theorem~\ref{thm:main1}.

{\bf Item (iii)}. As proved by Kuznetsov in \cite[Prop. 3.15]{kuznetquadrics}, $\perf(S,\cC_0)$ is Fourier-Mukai equivalent
to $\perf(\widehat{S},\cB_0)$. 
Hence, the above equivalence \eqref{eq:equivalences} reduces to
\begin{equation}\label{eq:equivalences-2}
V_{NC}(\perf_\dg(Q)) \Leftrightarrow V_{NC}(\perf_\dg(\widehat{S},\cB_0)) + V_{NC}(\perf_\dg(S))\,.
\end{equation}
The proof of the first claim follows now from Theorem~\ref{thm:main1}. 

Let us now prove the second claim, which via \eqref{eq:equivalences-2} is equivalent to the proof of $V_{NC}(\perf_\dg(Q))$.
Thanks to Vial \cite[Thm. 4.2 and Cor. 4.4]{vial-fibrations}, the rational Chow motive $M_\bbQ(Q)$ of $Q$ decomposes as $M_\bbQ(Q)
= M_\bbQ(S)^{\oplus (n- \dim(S))} \oplus N,$ where $N$ stands for a submotive of a smooth projective $k$-scheme of dimension 
$\leq \dim(S)$. Therefore, when $\dim(S) \leq 2$, conjecture $V(Q)=V_{NC}(\perf_\dg(Q))$ holds.
\begin{remark}\label{rk:new}
Assume that $S$ is a smooth projective curve and that $k$ is algebraically closed. In this remark we provide a ``categorical'' proof of the second
claim of item (iii) of Theorem \ref{thm:main2}. Thanks to the work of Graber-Harris-Starr \cite{graber_harris_starr},
the fibration $q:Q \to S$ admits a section. Making use of it, we can perform reduction by hyperbolic splitting in order to obtain a conic bundle $q':Q' \to S$;
consult \cite[\S 1.3]{auel-bernar-bologn} for details. The sheaf $\cC_0'$ of even parts of the associated Clifford algebra is such that the
categories $\perf(S,\cC_0)$ and $\perf(S,\cC_0')$ are Fourier-Mukai equivalent; see \cite[Rk. 1.8.9]{auel-bernar-bologn}.
As a consequence, using \eqref{eq:equivalences} and the fact that $\mathrm{dim}(S)=1$, we obtain the following equivalence:
\begin{equation}\label{eq:conj-final}
V_{NC}(\perf_\dg(Q)) \Leftrightarrow V_{NC}(\perf_\dg(Q'))\,.
\end{equation}
Since $S$ is a curve, $Q'$ is a surface. Therefore, conjecture \eqref{eq:conj-final} holds.
\end{remark}
\section{Proof of Theorem~\ref{thm:main3}}
{\bf Item (i)}. As proved by Kuznetsov in \cite[Thm. 5.5]{kuznetquadrics}, we have a Fourier-Mukai equivalence
$\perf(X) \simeq \perf(\bbP^{r-1},\cC_0)$ when $m-2r+1 =0$, the following semi-orthogonal decomposition
\begin{equation*}
\perf(X)=\langle \perf(\bbP^{r-1}, \cC_0), \cO_X(1), \ldots, \cO_X(m-2r+1) \rangle\,,
\end{equation*}
when $m-2r+1 > 0$, and a dual semi-orthogonal decomposition of $\perf(\bbP^{r-1}, \cC_0)$ (containing a copy of $\perf(X)$ and exceptional objects)
when $m-2r+1 < 0$. The proof of the case $m-2r+1=0$ is clear. Let us now prove the case $m-2r+1>0$; the proof of the case $m-2r+1<0$ is similar. Using
\cite[Prop.~3.11]{bernardara-tabuada-paranj}, one obtains the following direct sum decomposition in $\NChow(k)_F$
$$ \perf_\dg(X) \simeq \perf^\dg(\bbP^{r-1},\cC_0) \oplus \underbrace{\perf_\dg(\underline{k}) \oplus \cdots \oplus \perf_\dg(\underline{k})}_{(m-2r +1) \,\,\text{copies}} \,.$$
Thanks to Lemma~\ref{lem:smoothness}, the dg category $\perf^\dg(\bbP^{r-1},\cC_0)$ is smooth and proper\footnote{In the case $m-2r+1<0$, these properties follow from the existence of a fully faithful Fourier-Mukai functor $\perf(\bbP^{r-1},\cC_0)\to \perf(Q)$, with $Q \subset \bbP^{r-1} \times \bbP^m$ a smooth hypersurface.}. 
Since 
$\perf(\bbP^{r-1},\cC_0) \hookrightarrow \perf(X)$ is of Fourier-Mukai type (see \cite[Prop. 4.9]{kuznetquadrics}), an argument similar to the one of the proof of Theorem~\ref{thm:main2}(i) shows us that 
$$ V_{NC}(\perf_\dg(X)) \Leftrightarrow V_{NC}(\perf_\dg(\bbP^{r-1},\cC_0)) + V_{NC}(\perf_\dg(\underline{k}))\,.$$
The proof follows now automatically from Theorem~\ref{thm:main1}.

{\bf Item (ii)-(iii)}. The proofs are similar to those of items (ii)-(iii) of Theorem \ref{thm:main2}. 
\section{Proof of Theorems~\ref{thm:main-grass} and \ref{thm:main-deter}}
Assume that $X_L$ and $Y_L$ are as in classes of (i)-(ii) of Theorem \ref{thm:main-grass} (resp. as in class (i) of Theorem \ref{thm:main-deter}).
As proved by Kuznetsov in \cite[\S10-11]{kuznet-grass} (resp. in \cite[Thm. 3.4]{berna-bolo-faenzi}), one of the following three situations occurs:
\begin{itemize}
\item[(a)] there is a semi-orthogonal decomposition $\perf(X_L) =\langle \perf(Y_L), \cE_1, \ldots, \cE_n \rangle$, with the $\cE_i$'s exceptional bundles on $X_L$;
\item[(b)] there is a semi-orthogonal decomposition $\perf(Y_L) =\langle \perf(X_L), \cE'_1, \ldots, \cE'_n \rangle$, with the $\cE'_i$'s exceptional bundles on $Y_L$;
\item[(c)] there is a Fourier-Mukai equivalence between $\perf(X_L)$ and $\perf(Y_L)$.
\end{itemize}
Therefore, equivalence $V(X_L) \Leftrightarrow V(Y_L)$ is clear in situation (c). Since the inclusions of categories
$\perf(Y_L) \hookrightarrow \perf(X_L)$
(situation (a)) and $\perf(X_L) \hookrightarrow \perf(Y_L)$ (situation (b)) are of Fourier-Mukai type,
a proof similar to the one of Theorem~\ref{thm:main2}(i)
shows us that equivalence $V(X_L)  \Leftrightarrow V(Y_L)$ also holds in situations (a)-(b). 
Note that this concludes the proof of Theorem \ref{thm:main-deter} since conjecture $V(X_L)$ (resp. $V(Y_L)$) holds when $\dim(X_L) \leq 2$ (resp. $\dim(Y_L) \leq 2$).

Let us now focus on class (i) of Theorem \ref{thm:main-grass}. The smooth projective $k$-schemes $X_L$ and $Y_L$ are of dimensions $8-r$ and $r-2$, respectively. Hence,
$V(Y_L)$ holds when $r\leq 4$ and $V(X_L)$ when $r=6$. When $r=5$, $X_L$ (and  $Y_L$) is a Fano $3$-fold. As explained by Gorchinskiy and Guletskii
in \cite[\S5]{gorch-gul-motives-and-repr}, the Chow motive of $X$ admits a decomposition into Lefschetz motives and submotives of
curves. This implies that $V(X_L)$ also holds.

Let us now focus on class (ii) of Theorem \ref{thm:main-grass}. The smooth projective $k$-schemes $X_L$ and $Y_L$ are of dimensions $10-r$ and $r-4$, respectively. 
Hence, $V(Y_L)$ holds when $r \leq 6$ and $V(X_L)$ when $r \geq 8$. This achieves the proof.
\section{Proof of Theorem~\ref{thm:main-hpd}}
Following Kuznetsov \cite[\S 4]{kuznetsov:hpd}, let us denote by $\mathfrak{a}_i$ the orthogonal complement of $\bbA_{i+1}$ in $\bbA_i$; 
these are called the ``primitive subcategories'' in {\em loc.} {\em cit.} Since the conjecture $V_{NC}(\bbA_0^\dg)$ holds, we hence 
conclude by induction that the conjectures $V_{NC}(\bbA_i^\dg)$ and $V_{NC}(\mathfrak{a}_i^\dg)$ also hold for every $i$. Thanks to 
HPD (see \cite[Thm. 6.3]{kuznetsov:hpd}), $\perf(Y)$ admits a Lefschetz decomposition $\perf(Y) = \langle \bbB_{m}(-m), \ldots, \bbB_0 \rangle$
with respect to $\cO_Y(1)$ such that the primitive subcategories $\mathfrak{b}_i$ coincide (via a Fourier-Mukai functor) with the primitive 
subcategories $\mathfrak{a}_i$. Consequently, $V_{NC}(\mathfrak{b}_i^\dg)$ holds for every $i$. An inductive argument, starting with
$\mathfrak{b}_m = \bbB_m$, allows us then to conclude that conjecture $V_{NC}(\bbB_i^\dg)$ also holds for any $i$.
Now, thanks once also to HPD (see \cite[Thm. 5.3]{kuznetsov:hpd}),
there exists also a triangulated category $\bbC_L$ and semi-orthogonal decompositions
$$
\begin{array}{rl}
\perf(X_L) =& \langle \bbC_L, \bbA_{\mathrm{dim}(L)}(1), \ldots, \bbA_{n}(n-\mathrm{dim}(L)) \rangle \\
\perf(Y_L) =& \langle \bbB_{m}(\mathrm{dim}(L^\perp) - m), \ldots, \bbB_{\mathrm{dim}(L^\perp)}(-1), \bbC_L \rangle\,.
\end{array}
$$
Moreover, the composed functor $\perf(X_L) \to \bbC_L \to \perf(Y_L)$ is of Fourier-Mukai type. As a conclusion, since $X_L$ and $Y_L$ are smooth,
we can apply Theorem \ref{thm:main1} and obtain the following chain of equivalences
$$V(X_L) \Leftrightarrow V_{NC}(\perf_\dg(X_L)) \Leftrightarrow V_{NC}(\bbC_L^\dg) \Leftrightarrow V_{NC}(\perf_\dg(Y_L)) \Leftrightarrow V(Y_L)\,.$$
This achieves the proof.
\section{Proof of Theorem~\ref{thm:main-moish-ex}}
Thanks to the work of Cossec \cite{cossec}, the conic bundle $Y \to \bbP^2$ has a natural structure of
quartic double solid $Y \to \bbP^3$ ramified along a quartic symmetroid $D$. Via the natural involution
on the resolution of singularities of $D$, one hence obtains an Enriques surface $S$; consult \cite[\S 3]{cossec} for details. As proved by Zube in \cite[\S 5]{zube}, one has moreover a semi-orthogonal decomposition
$$\perf(S) = \langle \cT_S, \cE_1, \ldots, \cE_{10}\rangle,$$
with the $\cE_i$'s exceptional objects. Let us denote by $\cT_S^\dg$ the dg enhancement of $\cT_S$ induced from $\perf_\dg(S)$. Thanks to Lemma~\ref{lem:smoothness}, $\cT_S^\dg$ is smooth and proper. Hence, since $S$ is a surface, an argument similar to the one of the proof of Theorem~\ref{thm:main2}(i) shows us that conjecture $V_{NC}(\cT_S^\dg)$ holds.

Now, recall from Ingalls and Kuznetsov \cite[\S 5.5]{ingalls-kuznetsov} the construction of the Fourier-Mukai functor $\Phi:\perf(S) \to \perf(X)$ whose restriction to $\cT_S$ is fully-faithful. As proved in \cite[Prop. 3.8 and Thm. 4.3]{ingalls-kuznetsov}, one has a semi-orthogonal decomposition
$$\perf(X) = \langle \Phi(\cT_S), \cE'_1, \cE'_2 \rangle,$$
with the $\cE'_i$'s exceptional objects. As a consequence, we obtain the equivalence
\begin{equation}\label{eq:conjecture}
V_{NC}(\perf_\dg(X)) \Leftrightarrow V_{NC}(\Phi(\cT_S)^\dg)\,,
\end{equation}
where $\Phi(\cT_S)^\dg$ stands for the dg enhancement of $\Phi(\cT_S)$ induced from $\perf_\dg(X)$. Since the kernel $\cK$ of the above Fourier-Mukai functor $\Phi$ gives rise to a Morita equivalence $\Phi_\dg^\cK: \cT_S^\dg \to \Phi(\cT_S)^\dg$, we conclude that the conjecture \eqref{eq:conjecture} also holds. This achieves the proof.

\subsection*{Acknowledgments:} 
The authors are grateful to Bruno Kahn and Claire Voisin for useful comments and answers. G.~Tabuada was partially supported by a NSF CAREER Award.

\end{document}